\documentclass[leqno]{amsart}

\usepackage{amsmath}
\usepackage{amscd}
\usepackage{amsthm}
\usepackage{amssymb}
\usepackage{latexsym}

\usepackage{enumerate}

\pagestyle{plain}

\newtheorem{thm}{Theorem}[section]
\newtheorem{prop}{Proposition}[section]
\newtheorem{lem}{Lemma}[section]

\newtheorem{rem}{Remark}[section]

\newtheorem*{thmm}{Theorem~ALW}

\allowdisplaybreaks

\title{Positive and negative growth rates of measures and pointwise bounds of solutions of eigenvalue equations${}^{*}$}

\author{Hironori Kumura${}^{**}$}

\thanks{${}^{*}$\
{\it Mathematics Subject Classification}. Primary 58J50; Secondary 47A75.\\
{\it Key Words and Phrases}; Laplace-Beltrami operator, eigenfunction, exponential decay, spectrum}

\thanks{${}^{**}$\ 
smhkumu@ipc.shizuoka.ac.jp\\
Department of Mathematics, Shizuoka University, 
Shizuoka 422-8529, Japan} 
\date{revised form; April, 2012}

\begin{document}

\maketitle

\begin{abstract}
This paper, focusing on the growth rate of the measure, gives pointwise bounds of solutions of eigenvalue equations of the Laplace-Beltrami operator on noncompact Riemannian manifolds. 
\end{abstract}
\section{Introduction}

The Laplace-Beltrami operator $\Delta_g$, defined on $C^{\infty}_0(M)$, of a noncompact complete Riemannian manifold $(M, g)$ is essentially self-adjoint on $L^2(M, v_g)$; the analytic structures of its self-adjoint extension, denoted also by $\Delta_g$, and the geometries of $(M, g)$ are closely related to each other, and their relationship has been studied by several authors from various points of view. 
The main purpose of this paper is to study pointwise bounds of solutions of eigenvalue equations in cases that the growth rate of the measure is both positive and negative. 

First, we shall recall an earlier work due to Li-Wang on the decay of harmonic functions on noncompact Riemannian manifolds. 
Let $(M, g)$ be an $n$-dimensional connected noncompact complete Riemannian manifold. 
In the sequel, the following notations will be used: 
$\sigma(-\Delta_g)$, $\sigma_{\rm ess}(-\Delta_g)$, and $\sigma_{\rm pp}(-\Delta_g)$ respectively stand for the spectrum of $-\Delta_g$, the essential spectrum of $- \Delta_g$, and the set of eigenvalues of $-\Delta_g$. 
Moreover, let $v_g$ denote the Riemannian measure of $(M, g)$. 
Li-Wang improved the Agmon's method \cite{A-book} used for the Schr\"odinger operators on $\mathbb{R}^n$, and, among other things, proved the following: 
\begin{thmm}[Agmon \cite{A-book}, Li-Wang \cite{L-W}]
Let $(M, g)$ be a complete noncompact Riemannian manifold, and $E$ be an unbounded connected component of $M \backslash B(p_0, R)$ for some $R>0$. 
Let $\lambda_0(E)$ denote the minimum of the spectrum of the Dirichlet Laplacian $-\Delta_g^D|_E$ on $E$. 
Assume that $0 \le \lambda < \lambda_0(E)$, and that $u$ is a solution of the equation $-\Delta_g u - \lambda u = 0$ on $E$ with $\int_E u^2 \, dv_g <\infty$, where $\lambda$ is a constant. 
Then, there exists a constant $C_1 > 0$ such that 
\begin{align}
 \int_{E(R, R+1)} u^2 \, dv_g \le C_1 \exp \Big( - 2 \sqrt{\lambda_0(E) - \lambda} \, R \Big) \quad {\rm for~} R \gg 1, 
\end{align}
where we set $E(R, R+1) := \{ x \in E \mid R < {\rm dist}_g(\partial E, x) < R+1 \}$. 
\end{thmm}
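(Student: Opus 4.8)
The plan is to combine the variational characterization of $\lambda_0(E)$ with a ground-state substitution and an exponentially weighted, compactly supported test function whose weight is "switched off" beyond a radius $\rho$; the estimate then follows by letting $\rho\to\infty$. Throughout, write $r(x):=\mathrm{dist}_g(\partial E,x)$, a $1$-Lipschitz function on $E$, and put $a:=\sqrt{\lambda_0(E)-\lambda}>0$; note that $\{x\in E: r(x)>0\}$ is exactly the interior of $E$ in $M$, and that $\{r\le t\}\cap E$ is compact for every $t$ since $\partial E\subseteq\overline{B}(p_0,R)$. First I would record two ingredients. (i) By the min-max principle, $\lambda_0(E)\int_E w^2\,dv_g\le\int_E|\nabla w|^2\,dv_g$ for every Lipschitz $w$ with compact support in the interior of $E$ (such $w$ lies in the form domain of $-\Delta_g^D|_E$, by mollification). (ii) If $-\Delta_g u=\lambda u$ on $E$ and $\psi$ is such a Lipschitz function, then integrating $\langle\nabla u,\nabla(\psi^2u)\rangle$ by parts and using the equation gives $\int_E|\nabla(\psi u)|^2\,dv_g=\lambda\int_E\psi^2u^2\,dv_g+\int_E|\nabla\psi|^2u^2\,dv_g$. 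Combining (i) and (ii) yields the basic inequality
\[
(\lambda_0(E)-\lambda)\int_E\psi^2u^2\,dv_g\le\int_E|\nabla\psi|^2u^2\,dv_g
\]
for every admissible $\psi$.

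Next I would choose $\psi:=\phi\,e^{f_\rho}\,\chi_T$, where $\phi=\phi(r)$ is a fixed Lipschitz cutoff with $\phi\equiv0$ on $\{r\le1/2\}$ and $\phi\equiv1$ on $\{r\ge1\}$ (so that $\mathrm{supp}(\psi u)$ lies in the interior of $E$), $f_\rho(r):=a\min(r,\rho)$ for a parameter $\rho>1$, and $\chi_T=\chi_T(r)$ is a Lipschitz cutoff with $\chi_T\equiv1$ on $\{r\le T\}$, $\chi_T\equiv0$ on $\{r\ge T+1\}$, $|\chi_T'|\le2$, and $T>\rho$ (so that $\psi$ has compact support). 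Since $\nabla\psi=e^{f_\rho}(\phi\chi_T\nabla f_\rho+\chi_T\nabla\phi+\phi\nabla\chi_T)$ and the supports of $\nabla\phi$ (in $\{1/2<r<1\}$), of $\nabla\chi_T$ (in $\{T<r<T+1\}$), and the "kink locus" $\{r=\rho\}$ of $f_\rho$ are pairwise separated, the mixed terms collapse: $\langle\nabla\phi,\nabla\chi_T\rangle\equiv0$ and $\langle\nabla f_\rho,\nabla\chi_T\rangle\equiv0$, while $2\phi\chi_T^2\langle\nabla f_\rho,\nabla\phi\rangle e^{2f_\rho}$ is supported in $\{1/2<r<1\}$ (where $f_\rho=ar$, $\chi_T=1$) and bounded in absolute value by $C^{(1)}:=2ae^{2a}\int_{\{1/2<r<1\}}|\nabla\phi|\,u^2$, independent of $\rho,T$. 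Likewise $\int e^{2f_\rho}\chi_T^2|\nabla\phi|^2u^2\le C^{(2)}:=e^{2a}\int_{\{1/2<r<1\}}|\nabla\phi|^2u^2$ and $\int e^{2f_\rho}\phi^2|\nabla\chi_T|^2u^2\le4e^{2a\rho}\int_{\{T<r<T+1\}}u^2$, while the main term satisfies $\int e^{2f_\rho}\phi^2\chi_T^2|\nabla f_\rho|^2u^2\le a^2\int_{\{r<\rho\}}e^{2ar}\phi^2u^2$ (using $|\nabla r|\le1$ a.e.).

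The decisive point is that, because $a^2=\lambda_0(E)-\lambda$ \emph{exactly}, this main term cancels against part of the left-hand side of the basic inequality: writing $\int\psi^2u^2=\int_{\{r<\rho\}}\phi^2e^{2ar}u^2+e^{2a\rho}\int_{\{r\ge\rho\}}\phi^2\chi_T^2u^2$ and subtracting $a^2\int_{\{r<\rho\}}\phi^2e^{2ar}u^2$ from both sides, one is left with
\[
a^2e^{2a\rho}\int_{\{r\ge\rho\}}\phi^2\chi_T^2u^2\,dv_g\le C^{(1)}+C^{(2)}+4e^{2a\rho}\int_{\{T<r<T+1\}}u^2\,dv_g.
\]
Now let $T\to\infty$: since $\int_Eu^2\,dv_g<\infty$ the last term tends to $0$, and by monotone convergence $\int_{\{r\ge\rho\}}\phi^2\chi_T^2u^2\uparrow\int_{\{r\ge\rho\}}\phi^2u^2$. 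Using $\phi\equiv1$ on $\{r\ge\rho\}$ (valid as $\rho>1$) gives $\int_{\{r\ge\rho\}\cap E}u^2\,dv_g\le(C^{(1)}+C^{(2)})(\lambda_0(E)-\lambda)^{-1}e^{-2\sqrt{\lambda_0(E)-\lambda}\,\rho}$. Finally, since $E(R,R+1)\subseteq\{r\ge R\}\cap E$, taking $\rho=R$ yields the assertion with $C_1:=(C^{(1)}+C^{(2)})/(\lambda_0(E)-\lambda)$.

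The substantive part of the work is the bookkeeping of the four terms in $\int|\nabla\psi|^2u^2$ — in particular, verifying that the only $\rho$-dependence surviving on the right-hand side is the harmless prefactor of $\int_{\{T<r<T+1\}}u^2$, which vanishes in the limit. The one conceptual point I would stress is the choice of the broken weight $f_\rho=a\min(r,\rho)$ rather than the naive linear weight $(a-\varepsilon)r$: the latter only yields the exponent $\sqrt{\lambda_0(E)-\lambda}-\varepsilon$ (or the sharp exponent up to a polynomial factor after optimizing in $\varepsilon$), whereas the broken weight produces the sharp exponent with a clean constant. A minor technical point is the membership of $\psi u$ in the form domain of $-\Delta_g^D|_E$, handled by mollification.
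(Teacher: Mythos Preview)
Your argument is correct. The paper does not give its own proof of Theorem~ALW --- it is quoted as prior work, with the remark (in the paragraph immediately following) that the Li--Wang proof for $L^2$-harmonic functions carries over to $\lambda$-eigenfunctions ``by slight modifications; see [11, Lemma~1.1] and its proof''; what you have written is precisely that argument, namely the Agmon cut-off method combining the variational lower bound $\lambda_0(E)\int(\psi u)^2\le\int|\nabla(\psi u)|^2$, the identity $\int|\nabla(\psi u)|^2=\lambda\int\psi^2u^2+\int|\nabla\psi|^2u^2$, the broken weight $a\min(r,\rho)$, and the $L^2$ hypothesis to kill the far boundary term as $T\to\infty$.
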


Note that the inequality $(1)$ was proved only for $L^2$-harmonic functions in \cite{L-W}. 
However, it is not hard to see that their arguments hold good for $L^2$-solutions of eigenvalue equations by slight modifications; see [11, Lemma 1.1] and its proof. 

In general, $\lambda_0(E(R, \infty)) < \min \sigma_{\rm ess} (-\Delta_g|_{E})$ for $R>0$ (see Proposition 6.1 below, which shows that, there exists a rotationally symmetric manifold satisfying this inequality for any $R>0$: see also Remark 6.1). 
Hence, Theorem ALW is not so sharp. 
Moreover, it seems to be difficult to convert this integral bound (1) into the pointwise bounds of eigenfunctions. 
In its character, Theorem ALW cuts, as it were, lengthwise the end $E$ of $(M, g)$. 
On the other hand, this study will {\it track the growth rate of the measure $v_g$ along geodesic rays}, and investigate pointwise bounds for solutions $u$ of $-\Delta_g u + \lambda u = 0$ on $E$ by constructing barriers near the infinity. 

\begin{prop}
\begin{sloppypar}
Let $(M,g)$ be an $n$-dimensional noncompact Riemannian manifold and $U$ be an {\rm (}not necessarily relatively compact\,{\rm )} open subset of $M$. 
Assume that the boundary $\partial U$ is compact, connected, and $C^{\infty}$ and that the outward normal exponential map $\exp_{\partial U}^{\perp} : N^{+}(\partial U) \to E : =M \backslash U$ induces a diffeomorphism, where $N^{+}(\partial U) := \{ v\in TM|_{\partial U} \mid v {\rm ~is~outward~normal~to~}\partial U \}$. 
Let $\nu$ be the outward unit normal vector field along $\partial E$, and denote $r(x) := {\rm dist}_g(\partial U, x)$ for $x \in E$. 
Then, $[0, \infty) \times \partial E \ni (r, w) \mapsto \exp _w(r\nu_w) \in E$ is a coordinates on $E$. 

Let $c>0$ be a constant, and assume that $u \in L^2 (E, v_g)$ is a solution of the eigenvalue equation 
\begin{align}
( -\Delta_g - \lambda )u = 0 \quad {\rm on}~~E,
\end{align}
where $0 \le \lambda < \frac{c^2}{4}$ is a constant. 
For simplicity, we shall set $\alpha := \frac{c}{2}+\sqrt{\frac{c^2}{4}-\lambda}$. 
\begin{enumerate}[{\rm (i)}]
\item If $\displaystyle \liminf_{r \to \infty} \Delta_g r \ge c$ and if $\beta_1$ is a constant satisfying 
$$
  \frac{c}{2}-\sqrt{\frac{c^2}{4}-\lambda} < \beta_1 < \frac{c}{2}+\sqrt{\frac{c^2}{4}-\lambda}, 
$$
then $\exp (- \beta_1 r)$ is a supersolution of $(2)$ near the infinity of $E$. 
In particular, there exists a constant $C_1>0$, depending on $\beta_1$, such that 
$$
  |u(x)| \le C_1 \exp \big(-\beta_1 r(x) \big) \quad {\rm for}~~x \in E.
$$
\item If $\displaystyle \lim_{r \to \infty} \Delta_g r = c$ uniformly with respect to $w \in \partial E$ and if $\beta_2$ is a constant satisfying either 
$$
  0 \le \beta_2 < \frac{c}{2} - \sqrt{\frac{c^2}{4}-\lambda} \quad {\rm or} \quad \beta_2 > \frac{c}{2} + \sqrt{\frac{c^2}{4} - \lambda}, 
$$
then $\exp (- \beta_2 r)$ is a subsolution of $(2)$ near the infinity of $E$. 
\item If there exist constants $\beta_3 > 0$ and $\varepsilon > 0$ satisfying $c - \displaystyle \frac{\beta_3}{r(\log r)^{1 + \varepsilon}} \le \Delta_g r$ for $r \gg 1$, then there exists a constant $C_2 > 0$ such that 
$$
  |u(x)| \le C_2 \exp \big( -\alpha r(x) \big) \quad {\rm for}~~x \in E.
$$
\end{enumerate}
\end{sloppypar}
\end{prop}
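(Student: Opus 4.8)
The plan is to build explicit radial barriers and then invoke the maximum principle together with Theorem~ALW to get the pointwise bounds. Write $r = r(x) = \mathrm{dist}_g(\partial U, x)$, and recall that in the coordinates $(r,w)$ the Laplacian of a radial function $\varphi(r)$ satisfies $\Delta_g \varphi(r) = \varphi''(r) + (\Delta_g r)\,\varphi'(r)$. Hence, for $\varphi(r) = \exp(-\beta r)$ we compute
\begin{align}
(-\Delta_g - \lambda)\,\exp(-\beta r) = \big(\,\beta\,\Delta_g r - \beta^2 - \lambda\,\big)\exp(-\beta r). \notag
\end{align}
The sign of the right-hand side is governed by the quadratic $P(\beta) := -\beta^2 + \beta\,\Delta_g r - \lambda$; since $0 \le \lambda < c^2/4$, the equation $-\beta^2 + \beta c - \lambda = 0$ has the two real roots $\tfrac{c}{2}\pm\sqrt{\tfrac{c^2}{4}-\lambda}$, i.e.\ $\alpha$ and $c-\alpha$. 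This is the algebraic heart of the statement.

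For part (i): if $\liminf_{r\to\infty}\Delta_g r \ge c$, then for any $\beta_1$ strictly between the two roots we have $-\beta_1^2 + \beta_1 c - \lambda > 0$ by concavity of the quadratic, so for $r$ large enough $\beta_1\,\Delta_g r - \beta_1^2 - \lambda \ge \beta_1(c-\delta) - \beta_1^2 - \lambda > 0$ for small $\delta$; hence $(-\Delta_g-\lambda)\exp(-\beta_1 r) \ge 0$ near infinity, i.e.\ $\exp(-\beta_1 r)$ is a supersolution on some end region $E(R_0,\infty)$. For part (ii): if $\Delta_g r \to c$ uniformly and $\beta_2$ lies outside the interval of roots, then $-\beta_2^2 + \beta_2 c - \lambda < 0$, so $\beta_2\,\Delta_g r - \beta_2^2 - \lambda < 0$ for $r \gg 1$ uniformly in $w$, giving $(-\Delta_g-\lambda)\exp(-\beta_2 r) \le 0$, a subsolution near infinity. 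Part (iii) is the delicate case: with $\Delta_g r \ge c - \beta_3/(r(\log r)^{1+\varepsilon})$ one should not use a pure exponential but a corrected barrier of the form $\varphi(r) = \exp\!\big(-\alpha r + h(r)\big)$ where $h$ is chosen so that the error term is absorbed; computing $(-\Delta_g-\lambda)\varphi = \big(-(\,\alpha - h')^2 + (\alpha - h')\Delta_g r + h'' - \lambda\big)\varphi$, the leading terms at $\beta = \alpha$ cancel, leaving $\alpha\big(\Delta_g r - c\big) + h''-2\alpha h' + (\text{lower order})$, and one needs $h''-2\alpha h' \gtrsim \alpha\beta_3/(r(\log r)^{1+\varepsilon})$; the choice $h'(r) = -\,C/(\log r)^{\varepsilon}$ (up to constants), whose integral converges to a finite limit as $r\to\infty$ precisely because $\varepsilon>0$, makes $\varphi$ a supersolution near infinity while keeping $\varphi \asymp \exp(-\alpha r)$.

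In each supersolution case ((i) and (iii)), the pointwise bound follows by a standard barrier/maximum-principle argument on the region $E(R_0,\infty)$: Theorem~ALW (applied with any $0 \le \lambda < \lambda_0(E(R_0,\infty))$, valid after enlarging $R_0$ since $\lambda_0$ of a smaller end increases) gives $\int_{E(R,R+1)} u^2\,dv_g \to 0$ exponentially, and a local elliptic (mean-value / sub-solution) estimate upgrades this to $\sup_{E(R,R+1)}|u| \to 0$; in particular $|u|$ is bounded on $\partial E(R_0,\infty)$ and tends to $0$ at infinity, while the constructed supersolution $\varphi$ is bounded below by a positive constant on the compact boundary $\{r = R_0\}$. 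Thus for a suitable large constant $A$, the function $A\varphi \pm u$ is a supersolution of $(-\Delta_g-\lambda)$, is nonnegative on $\{r=R_0\}$, and tends to $0$ at infinity; since $\lambda < \lambda_0$ there is no nontrivial nonnegative bottom of the spectrum issue, so the maximum principle for the operator $-\Delta_g - \lambda$ on $E(R_0,\infty)$ forces $A\varphi \pm u \ge 0$, i.e.\ $|u| \le A\varphi \le C\exp(-\beta_1 r)$ (resp.\ $C_2\exp(-\alpha r)$). Finally, on the compact piece $E \setminus E(R_0,\infty)$ the bound is automatic after enlarging the constant, since $u$ is continuous there. The main obstacle is step (iii): finding the right correction $h$ and verifying that the $1/(r(\log r)^{1+\varepsilon})$ decay of the curvature defect is exactly enough for $\int^\infty h'$ to converge — too slow a decay would force $h(r)\to-\infty$ and destroy the bound, which is why the $(\log r)^{1+\varepsilon}$ weight is sharp.
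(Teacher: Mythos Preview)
Your barrier computations for (i) and (ii) are correct and match the paper. There are, however, two genuine problems.

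\textbf{The correction in (iii) is wrong.} You propose $h'(r)=-C/(\log r)^{\varepsilon}$ and claim its integral converges; it does not, since $\int^{\infty}(\log r)^{-\varepsilon}\,dr=\infty$ for every $\varepsilon>0$. The correct correction is $h(r)=-\delta_1(\log r)^{-\varepsilon}$ with $\delta_1=\alpha\beta_3/(\varepsilon(2\alpha-c))$, so that $h'(r)=\varepsilon\delta_1\,r^{-1}(\log r)^{-1-\varepsilon}$. With this choice the term $(2\alpha-c)h'$ exactly cancels the defect $\alpha\beta_3\,r^{-1}(\log r)^{-1-\varepsilon}$ coming from $\alpha(\Delta_g r-c)$, and the next-order term $\varepsilon\delta_1\,r^{-2}(\log r)^{-1-\varepsilon}$ is what makes the barrier a strict supersolution. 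Your heuristic ``$h''-2\alpha h'\gtrsim\dots$'' has the signs reversed and misses the factor $2\alpha-c$ rather than $2\alpha$; both matter for the exact cancellation.

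\textbf{The comparison step has a gap.} To run your maximum-principle argument on $E(R_0,\infty)$ you need $|u(x)|\to 0$ pointwise as $r(x)\to\infty$, and you propose to extract this from the $L^2$ decay of Theorem~ALW via a local mean-value inequality. But Proposition~1.1 assumes no Ricci lower bound and no injectivity-radius bound on $E$, so there is no uniform local elliptic estimate available; the constant in the De Giorgi--Nash--Moser inequality on $B_x(1)$ may blow up as $x\to\infty$. (You also need $\lambda<\lambda_0(E(R_0,\infty))$, which does follow from $\liminf\Delta_g r\ge c$ but requires an argument.)

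The paper sidesteps both issues by invoking Agmon's comparison lemma (Lemma~2.1): if $w>0$ is a continuous supersolution and $v$ a continuous subsolution of $(-\Delta_g-\lambda)$ with $\liminf_{R\to\infty}R^{-2}\int_{E(R,\alpha R)}v^2\,dv_g=0$, then $v\le Cw$. Since $|u|$ is a subsolution and $u\in L^2(E)$ gives the required decay of the integral automatically, one gets $|u|\le C\,\exp\{-\alpha r-\delta_1(\log r)^{-\varepsilon}\}\le C\exp(-\alpha r)$ directly, with no pointwise control on $u$ needed a priori and no appeal to Theorem~ALW or to mean-value inequalities.
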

Note that, for example, (i), and (iii) in Proposition $1.1$ are sharp. 
Indeed, for example, there exists a rotationally symmetric manifold $(\mathbb{R}^n, g:= dr^2 + f(r)^2g_{S^{n-1}(1)})$ satisfying $c - \frac{\beta}{r \log (r+1) +1} \le \Delta_g r$ for $r \gg 1$ and a solution of $( - \Delta_g - \lambda )u = 0$ satisfies $u = (\log r)^{\theta} \exp \{ - \alpha r \}$ for $r\gg 1$, where $\beta > 0$ and $\theta = \theta (\beta, n, \lambda)>0$ are constants. 
Hence, we cannot adopt $\varepsilon = 0$ in Proposition 1.1 (iii). 
Various other bounds under various other growth rate conditions of the measure can be obtained, as is seen in Proposition 2.1 below. 

Note also that, our method of proof of Proposition 1.1 requires the assumption that the outward normal exponential map is a diffeomorphism, but if the $(n-1)$-dimensional measure of ${\mathcal Cut} (\partial E) \backslash {\mathcal Focal}(\partial E)$ is zero, this assumption of diffeomorphism is not required and the assertion of Proposition 1.1 holds good (see Lemma 4.1 below). 
Here, ${\mathcal Focal}(\partial E)$ stands for the focal points of $\partial E$ in $E$.

Next, we shall mention Theorem 1.1 below, which, in brief, states that the same assertions as those in Proposition 1.1 and Proposition 2.1 below hold on angular regions of Cartan-Hadamard manifolds.  


\begin{thm}
Let $(M, g)$ be an $n$-dimensional complete Riemannian manifold and $U_M$ be an open subset of $M$. 
Assume that $\min \sigma_{\rm ess}(-\Delta_g) >0$. 
Assume that, there exist an $n$-dimensional Cartan-Hadamard manifold $(N, h)$ satisfying its sectional curvature $K_h \le -\kappa_0$ for some constant $\kappa_0 >0$, a unit tangent vector $v_1^N \in T_qN$, and constants, $\theta_0 >0$, $0 \le \lambda < \min \sigma_{\rm ess}(-\Delta_g)$ and $c>0$, such that the following three conditions hold\,{\rm :} 
\begin{enumerate}[{\rm $1.$}]
\item there exists an isometry $\varphi : \exp_q^N \big( {\mathcal R}(v_1^N, \theta_0) \big) \xrightarrow{\sim} U_M$. Here, $\exp_q^N$ stands for the exponential map of $(N, h)$ at $q$, and ${\mathcal R}(v_1^N, \theta)$ denotes an open cone in $T_qN$ defined by ${\mathcal R}(v_1^N, \theta_0) := \{ v \in T_qN \mid \measuredangle (v_1^N, v) < \theta_0 \}${\rm ;} 
\item there exists a large constant $b \gg \kappa_0$ satisfying ${\rm Ric}_g \ge -(n-1)b$ on $M${\rm ;} 
\item $\frac{c}{2}-\sqrt{\frac{c^2}{4}-\lambda} < \sqrt{\min \sigma_{\rm ess}(-\Delta_g) - \lambda}$. 
\end{enumerate}
Assume that $u$ is a $L^2(M, v_g)$-solution of $- \Delta_g u - \lambda u = 0$ on $M$. 
On these understanding, if we replace $E$ with $\varphi \left(\exp_q^N \big( {\mathcal R}(v_1^N, \theta_1) \big)\right)$, then the same assertions as those stated in Proposition $1.1$ and Proposition $2.1$ below hold on $\varphi \left(\exp_q^N \big( {\mathcal R}(v_1^N, \theta_1) \big)\right)$ for any fixed $\theta_1 \in (0, \theta_0)$. 
\end{thm}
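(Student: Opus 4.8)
The plan is to reduce the statement on the angular region $\varphi(\exp_q^N({\mathcal R}(v_1^N,\theta_1)))$ to an application of Proposition 1.1 (and Proposition 2.1) by producing, on a suitable exhaustion of that region, the two ingredients those propositions consume: a lower bound on $\Delta_g r$ along geodesics emanating from the "boundary'' of the region, and an $L^2$ control of $u$ good enough to run the maximum principle against the barriers $\exp(-\beta r)$. The subtlety is that $\varphi(\exp_q^N({\mathcal R}(v_1^N,\theta_1)))$ is not of the form $M\setminus U$ with compact connected smooth boundary and globally defined normal exponential coordinates, so one cannot quote Proposition 1.1 verbatim; instead I would work on the nested cones ${\mathcal R}(v_1^N,\theta_1)\subset\subset{\mathcal R}(v_1^N,\theta_0)$ and use the comparison manifold $(N,h)$ to get the geometry for free.

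First I would transport everything through the isometry $\varphi$ and work on $W_0:=\exp_q^N({\mathcal R}(v_1^N,\theta_0))\subset N$, with $r(x):=\operatorname{dist}_N(q,x)$ playing the role of the distance function; on a cone this is smooth away from $q$ and the level sets meet $\partial{\mathcal R}$ transversally, so $[R_0,\infty)\times(\text{angular slice})\ni (r,w)\mapsto \exp_q^N(r\,w)$ is a genuine coordinate system on $W_0\setminus B(q,R_0)$, giving the coordinate hypothesis of Proposition 1.1 in disguise. Because $K_h\le -\kappa_0<0$, the Hessian comparison theorem gives $\Delta_h r\ge (n-1)\sqrt{\kappa_0}\coth(\sqrt{\kappa_0}\,r)\to (n-1)\sqrt{\kappa_0}$; combined with hypothesis 3 (which is precisely the inequality ensuring $\tfrac c2-\sqrt{\tfrac{c^2}{4}-\lambda}$ lies below $\sqrt{\min\sigma_{\rm ess}-\lambda}$, hence below the available growth rate $(n-1)\sqrt{\kappa_0}$ after choosing $\kappa_0$ appropriately), this supplies the hypotheses $\liminf_{r\to\infty}\Delta_g r\ge c$ of Proposition 1.1(i), and the sharper hypotheses of parts (ii), (iii) and of Proposition 2.1 follow from the more refined comparison estimates $(n-1)\sqrt{\kappa_0}\coth(\sqrt{\kappa_0}r)=(n-1)\sqrt{\kappa_0}+O(e^{-2\sqrt{\kappa_0}r})$, which decays faster than any $1/(r(\log r)^{1+\varepsilon})$. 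Here I would use hypothesis 2, ${\rm Ric}_g\ge-(n-1)b$, only to guarantee that $u$ and its gradient cannot blow up too fast — i.e. to get the a priori local $L^\infty$ and gradient bounds (via, e.g., De Giorgi–Nash–Moser / mean value inequalities on geodesic balls of uniformly controlled geometry) that let the comparison argument of Proposition 1.1 start.

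The second ingredient is the integral decay that seeds the barrier argument: I would invoke Theorem ALW (valid because $\min\sigma_{\rm ess}(-\Delta_g)>0$ and $\lambda<\min\sigma_{\rm ess}$, so on the end containing the cone the bottom of the Dirichlet spectrum exceeds $\lambda$) to get $\int u^2\,dv_g$ decaying exponentially over dyadic shells, hence in particular $u\in L^2$ on each sub-cone and $\int_{W_0\setminus B(q,R)}u^2\to 0$; this is exactly the hypothesis "$u\in L^2(E,v_g)$'' that Proposition 1.1 needs, now verified on the region $E:=\varphi(\exp_q^N({\mathcal R}(v_1^N,\theta_1)))$. Then the proof of Proposition 1.1 applies almost verbatim: one fixes $\theta_1<\theta_0$, takes the barrier $\exp(-\beta r)$ on the slab $R\le r\le R'$ inside the larger cone ${\mathcal R}(v_1^N,\theta_0)$, and compares with $C\,u$ on the boundary; the lateral boundary $\partial{\mathcal R}(v_1^N,\theta_1)$-piece is controlled because it sits strictly inside ${\mathcal R}(v_1^N,\theta_0)$ where, by the Li–Wang/Agmon integral bound together with elliptic estimates, $|u|$ is already exponentially small with a rate strictly better than $\beta$ once $\theta_1<\theta_0$ — this is the one genuinely new point compared to the end case, and it is where the inclusion of cones, rather than a single end, is essential.

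The main obstacle I anticipate is precisely this control of $u$ on the lateral boundary of the sub-cone: unlike an end $M\setminus B(p_0,R)$, whose only boundary is the compact $\partial E$, the truncated cone has a "side'' along which one has no equation-free decay a priori. The resolution is to exploit the strict nesting $\theta_1<\theta_0$ and the fact that the Agmon-type exponential decay on the full cone ${\mathcal R}(v_1^N,\theta_0)$, when restricted to the smaller angular region, beats the in-radius barrier rate — geometrically, points of $\partial{\mathcal R}(v_1^N,\theta_1)$ at distance $r$ from $q$ have distance to $\partial{\mathcal R}(v_1^N,\theta_0)$ comparable to $r$, so the lateral decay is exponential in $r$ with a definite rate, and choosing $\beta$ appropriately (as in Proposition 1.1) makes the barrier dominate. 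Once this is in place, parts (i)–(iii) and Proposition 2.1 transfer with only cosmetic changes, the curvature assumption $K_h\le-\kappa_0$ doing the work that "$\liminf\Delta_g r\ge c$'' did before, and $\min\sigma_{\rm ess}(-\Delta_g)>0$ together with hypothesis 3 ensuring the rates are in the admissible range.
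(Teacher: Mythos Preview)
Your identification of the lateral boundary as the crux is correct, but your proposed resolution has a genuine gap. You claim that the Agmon--Li--Wang decay on the full cone, restricted to $\partial\mathcal{R}(v_1^N,\theta_1)$, ``beats the in-radius barrier rate'' so that the barrier $\exp(-\beta r)$ dominates there. This is backwards. The ALW/Donnelly bound only gives $|u|\le C\exp(-\delta_1 r)$ with $\delta_1$ close to $\sqrt{\min\sigma_{\rm ess}-\lambda}$; hypothesis~3 guarantees only that $\delta_1>\tfrac{c}{2}-\sqrt{\tfrac{c^2}{4}-\lambda}$, \emph{not} that $\delta_1\ge\alpha=\tfrac{c}{2}+\sqrt{\tfrac{c^2}{4}-\lambda}$. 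So on the lateral boundary you know $|u|\lesssim\exp(-\delta_1 r)$, but the target barrier $\exp(-\beta r)$ with $\beta$ close to $\alpha$ decays \emph{faster} than this and therefore does not dominate $|u|$ there. A straight comparison with $\exp(-\beta r)$ on the truncated sub-cone fails.

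The paper handles this with two ideas you are missing. First, it builds a \emph{blended} supersolution $w=\widetilde\varphi\,\exp(-a_1 r)+(1-\widetilde\varphi)\exp(-\delta_1 r)$, where $\widetilde\varphi$ is an Anderson--Schoen mollification of an angular cutoff between $\mathcal{R}(\theta_1)$ and $\mathcal{R}((\theta_1+\theta_0)/2)$; the curvature bound $K_h\le-\kappa_0$ is used here (not to bound $\Delta_g r$ as you suggest) to get $|\nabla\widetilde\varphi|,\,|\Delta_g\widetilde\varphi|\le C/\sinh(\sqrt{\kappa_0}\,r)$, which makes the cross terms in $(-\Delta_g-\lambda)w$ harmless. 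This $w$ equals $\exp(-\delta_1 r)$ near the lateral boundary (where $|u|$ is controlled by exactly that) and $\exp(-a_1 r)$ on the inner sub-cone. Second, the supersolution computation forces $a_1<\delta_1+\sqrt{\kappa_0}$, so one cannot jump directly to $\alpha$; the paper \emph{iterates}, replacing $\delta_1$ by $a_1$, shrinking $\theta$ slightly, and repeating until the rate reaches any $\beta_1<\alpha$. You also misread the role of hypothesis~3: it does not supply $\liminf\Delta_g r\ge c$ (that is a separate hypothesis inherited from each case of Proposition~2.1); its sole purpose is to place the initial Donnelly rate $\delta_1$ in the interval $(\tfrac{c}{2}-\sqrt{\tfrac{c^2}{4}-\lambda},\alpha)$ so that $\exp(-\delta_1 r)$ is itself a supersolution and the bootstrap can start.
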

For a ground state, we can also show various lower bounds which are mentioned as upper bounds for $u$ in Proposition 2.1 below. 
But, we shall only give the following result here, because other versions of lower bounds are easily formulated and proved in similar manner: 
\begin{thm}
Let $(M, g)$ be an $n$-dimensional noncompact complete Riemannian manifold and $U$ be a {\rm relatively compact} open subset of $M$ with $C^{\infty}$ boundary $\partial U$. 
We write $r:={\rm dist}_g(\partial U,*)$ on $M\backslash U$. 
Let $c > 0$ be a positive constant, and assume that $\lambda_0 := \min \sigma(-\Delta_g) < \frac{c^2}{4}$ and $\lambda_0 \in \sigma_{\rm pp}(-\Delta_g)$. 
Let $u$ be a ground state, that is, a positive solution of $(-\Delta_g - \lambda_0)u =0$ on $M$, and set $\alpha_0 := \frac{c}{2} + \sqrt{\frac{c^2}{4} - \lambda_0}$. 
Assume that there exist constants $r_0 \ge 0$, $\delta > 0$, and $C_1 >0$ satisfying 
\begin{align}
  \Delta_g r \le c + \frac{C_1}{r \big( \log (r+1) \big)^{1+\delta}+1} 
  \quad {\rm on}~\{ x\in M\backslash U \mid r(x)\ge r_0 \} \, 
  \backslash \, {\mathcal Cut}(\partial U).
\end{align}
Then, there exists a constant $C_2 >0$ such that 
$$
  u(x) \ge C_2 \,\exp \big( - \alpha_0 r(x) \big) \quad {\rm for}~~x \in M. 
$$ 
\end{thm}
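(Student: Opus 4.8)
Here is how I would approach the proof.

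The plan is first to localise the estimate near infinity: fixing a regular value $r_1\gg 1$ of $r$, the set $\{r\le r_1\}\cup U$ is compact, $u$ is positive and continuous on it and $e^{-\alpha_0 r}\le 1$ there, so it suffices to prove $u\ge C_2 e^{-\alpha_0 r}$ on $\{r\ge r_1\}$. Write $\gamma:=\sqrt{c^2/4-\lambda_0}>0$, so that $\alpha_0=\tfrac c2+\gamma$, $\alpha_0^2+\lambda_0=c\alpha_0$ and $c-2\alpha_0=-2\gamma$. The heart of the proof is the construction of a positive \emph{subsolution} of $(-\Delta_g-\lambda_0)(\cdot)=0$ of size $e^{-\alpha_0 r}$. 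Fix $\varepsilon\in(0,\delta)$ and set $h(r):=1+\bigl(\log(r+2)\bigr)^{-\varepsilon}$ and $\phi:=e^{-\alpha_0 r}h(r)$. Using $\Delta_g\psi(r)=\psi''(r)+\psi'(r)\Delta_g r$ for radial $\psi$ together with the three identities above, a direct computation gives, off ${\mathcal Cut}(\partial U)$,
$$
-\Delta_g\phi-\lambda_0\phi=-e^{-\alpha_0 r}\Bigl[\,h''-2\gamma h'+(\Delta_g r-c)(h'-\alpha_0 h)\,\Bigr].
$$
For $r\gg 1$ one has $h''>0$, $h'<0$ and $-M_0\le h'-\alpha_0 h<0$ for a constant $M_0$; since $(3)$ bounds $\Delta_g r-c$ from above by $C_1\bigl(r(\log(r+1))^{1+\delta}+1\bigr)^{-1}$, the bracket is $\ge -2\gamma h'-M_0C_1\bigl(r(\log(r+1))^{1+\delta}+1\bigr)^{-1}$, and because $-2\gamma h'=2\gamma\varepsilon(r+2)^{-1}(\log(r+2))^{-1-\varepsilon}$ while $\varepsilon<\delta$, this is $\ge 0$ once $r\ge r_1$ (after enlarging $r_1$). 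Thus $\phi$ is a smooth subsolution on $\{r>r_1\}\setminus{\mathcal Cut}(\partial U)$; moreover $\phi'<0$ there, and since $r$ satisfies $\Delta_g r\le(\Delta_g r)_{\mathrm{smooth}}$ across ${\mathcal Cut}(\partial U)$ in the barrier sense, $\Delta_g\phi+\lambda_0\phi\ge 0$ holds distributionally on all of $\{r>r_1\}$ (as in Lemma 4.1).

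Next I would record that $\phi\in L^2(M\setminus U,v_g)$. Integrating $(3)$ along the normal geodesics, and using $\int^{\infty}C_1\bigl(t(\log(t+1))^{1+\delta}+1\bigr)^{-1}dt<\infty$, gives $v_g(\{r<s\})\le C e^{cs}$ for $s\ge 1$; since $1\le h\le 2$ and $2\alpha_0-c=2\gamma>0$,
$$
\int_{M\setminus U}\phi^{2}\,dv_g\ \le\ 4\int_{M\setminus U}e^{-2\alpha_0 r}\,dv_g\ \le\ C'\int_{0}^{\infty}e^{(c-2\alpha_0)s}\,ds\ <\ \infty .
$$
This is precisely where the hypothesis $\lambda_0<c^2/4$ enters.

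Finally I would compare $\phi$ and $u$ by a Liouville argument for a weighted Laplacian. Since $u>0$ is smooth, $v:=\phi/u$ is smooth on $\{r\ge r_0\}$ and
$$
\operatorname{div}\!\bigl(u^2\nabla v\bigr)=\operatorname{div}\!\bigl(u\nabla\phi-\phi\nabla u\bigr)=u\,\Delta_g\phi-\phi\,\Delta_g u=u\,(\Delta_g\phi+\lambda_0\phi)\ \ge\ 0
$$
distributionally on $\{r>r_1\}$. Put $k:=\sup_{\{r=r_1\}}\phi/u=\phi(r_1)/\min_{\{r=r_1\}}u\in(0,\infty)$ and $V:=(v-k)^{+}$ on $\{r\ge r_1\}$; then $V\ge 0$, $V=0$ on $\{r=r_1\}$, the singular parts of $\operatorname{div}(u^2\nabla V)$ across $\{\phi/u=k\}$ and across ${\mathcal Cut}(\partial U)$ are nonnegative, so $\operatorname{div}(u^2\nabla V)\ge 0$ on $\{r>r_1\}$, and $V^2u^2\le\phi^2\in L^1$. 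Taking cut-offs $\chi_R$ with $\chi_R\equiv 1$ on $B(p_0,R)$, $\operatorname{supp}\chi_R\subset B(p_0,2R)$, $|\nabla\chi_R|\le C/R$ (for a fixed $p_0\in M$), testing the above inequality against $\chi_R^2V\ge 0$ and integrating by parts — the boundary term over $\{r=r_1\}$ vanishing because $V=0$ there — yields, via Cauchy--Schwarz,
$$
\int_{\{r>r_1\}}\chi_R^{2}\,u^2|\nabla V|^2\,dv_g\ \le\ \frac{4C^2}{R^{2}}\int V^{2}u^2\,dv_g\ \longrightarrow\ 0\qquad(R\to\infty),
$$
so $\nabla V\equiv 0$ on $\{r>r_1\}$; since $V$ vanishes on $\{r=r_1\}$, $V\equiv 0$ on $\{r\ge r_1\}$. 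Hence $\phi/u\le k$, i.e. $u\ge k^{-1}\phi\ge k^{-1}e^{-\alpha_0 r}$ on $\{r\ge r_1\}$, which together with the first paragraph proves the theorem.

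The step I expect to be the main obstacle is this last comparison. The naive route — comparing $\phi$ with $u$ by the generalized maximum principle on annuli $\{r_1<r<t\}$ — is circular, since it would need a lower bound for $\phi/u$ on the outer boundary $\{r=t\}$, which is essentially the conclusion. What removes the circularity is to recast everything as a Liouville statement for the single nonnegative $u^2v_g$-subharmonic function $V$: there the global $L^2(v_g)$-integrability of the barrier $\phi$ (second paragraph) supplies the only information needed "at infinity". A secondary, purely technical point, handled as in Lemma 4.1, is the verification that passing $r$ through ${\mathcal Cut}(\partial U)$ only helps all the inequalities involved.
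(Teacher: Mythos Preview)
Your proof is correct and follows essentially the same route as the paper: construct a positive subsolution barrier of size $e^{-\alpha_0 r}$ times a slowly varying correction (you take $1+(\log(r+2))^{-\varepsilon}$, the paper takes $\exp\bigl((\log r)^{-\varepsilon}\bigr)$, which agree to first order), check via the hypothesis on $\Delta_g r$ that it lies in $L^2$, handle the cut locus through the monotonicity $\phi'<0$ exactly as in Lemma~4.1, and then compare with the ground state $u$. Your final ``Liouville argument for the weighted Laplacian'' is precisely the content of Agmon's comparison principle (Lemma~2.1, proved in the paper as Lemma~5.1), which the paper simply invokes at that step rather than reproving inline.
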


We can explicitly construct a ground state with various decay orders. 
For example, we have the following: 
\begin{prop}
Let $\kappa$ be a positive constant. 
Then, for any $\lambda_0 \in (0, \frac{(n-1)^2\kappa}{4})$, there exists a rotationally symmetric Riemannian manifold $(M, g) = (\mathbb{R}^n, dr^2 + f_{\lambda_0}(r)^2 g_{S^{n-1}(1)})$ such that the following {\rm (i) and (ii)} hold\,{\rm :}
\begin{enumerate}[{\rm $($i$)$}]
\item $f_{\lambda_0}(r) = C_{r_0} \cdot \exp \{ \sqrt{\kappa}r \}$ for $r\ge r_0$, and hence, 
$$
  \Delta_g r \equiv (n-1) \kappa \quad {\rm for}~r \ge r_0, \quad 
  {\rm and} \quad 
  \sigma_{\rm ess}(-\Delta_g) = \big[ \frac{(n-1)^2\kappa}{4}, \infty \big);
$$
\item $\lambda_0$ is an eigenvalue of $-\Delta_g$ and $\lambda_0 = \min \sigma (-\Delta_g)$. Moreover, a ground state $u_0$, that is, a positive-valued eigenfunction with eigenvalue $\lambda_0$ satisfies
\begin{align*}
  u_0(x) = \exp \big( - a_0 \hspace{0.1mm} r(x) \big) 
  \quad {\rm for}~x~{\rm with}~r(x) \ge r_0.
\end{align*}
\end{enumerate}
Here, $r_0 = r_0(\lambda_0,n)$ is a positive constant depending only on $\lambda_0$ and $n${\rm ;} $C_{r_0}$ is a constant depending only on $r_0${\rm ;} $a_0 := \frac{(n-1)\sqrt{\kappa}}{2} + \sqrt{\frac{(n-1)^2\kappa}{4}-\lambda_0}$ is a constant. 
\end{prop}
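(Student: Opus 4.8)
The plan is to construct the manifold around a prescribed ground state rather than the reverse. Writing a radial function as $u_0=\phi_0(r)$, the equation $(-\Delta_g-\lambda_0)u_0=0$ on $\mathbb{R}^n\setminus\{0\}$ is equivalent to $\phi_0''+(n-1)\tfrac{f'}{f}\phi_0'+\lambda_0\phi_0=0$, so wherever $\phi_0'\neq0$ the warping function is recovered from $\phi_0$ by
\[
 \frac{f'(r)}{f(r)}=-\frac{\phi_0''(r)+\lambda_0\phi_0(r)}{(n-1)\,\phi_0'(r)}.
\]
First I would set $\phi_0(r):=\exp(-a_0 r)$ for $r\ge r_0$, where $r_0$ is fixed later. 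Since $a_0=\tfrac{(n-1)\sqrt\kappa}{2}+\sqrt{\tfrac{(n-1)^2\kappa}{4}-\lambda_0}$ is by construction a root of $X^2-(n-1)\sqrt\kappa\,X+\lambda_0=0$, one has $a_0^2+\lambda_0=(n-1)\sqrt\kappa\,a_0$, and the displayed identity then forces $f'/f\equiv\sqrt\kappa$ on $[r_0,\infty)$; hence $f(r)=C_{r_0}\exp(\sqrt\kappa\,r)$ there, $\Delta_g r=(n-1)f'/f$ is constant on $[r_0,\infty)$, and a standard computation (decomposition of $-\Delta_g$ over spherical harmonics, conjugation of each radial block by $f^{(n-1)/2}$, the terms $\mu_k/f^2\to0$ being relatively compact) gives $\sigma_{\mathrm{ess}}(-\Delta_g)=[\tfrac{(n-1)^2\kappa}{4},\infty)$. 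Moreover on $[r_0,\infty)$ we have $\phi_0^2 f^{n-1}=C_{r_0}^{n-1}\exp\big(-2\sqrt{(n-1)^2\kappa/4-\lambda_0}\,r\big)$, which is integrable, so the resulting $u_0$ will lie in $L^2(M,v_g)$. This proves (i).

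Next I would extend $\phi_0$ to a function on $[0,\infty)$ that is $C^\infty$, even at the origin (all odd derivatives vanishing at $0$), strictly positive, strictly decreasing on $(0,\infty)$, equal to $\exp(-a_0 r)$ to infinite order at $r_0$, and — the one non-obvious requirement — normalized so that
\[
 \phi_0''(0)=-\frac{\lambda_0}{n}\,\phi_0(0).
\]
Such a $\phi_0$ exists once $r_0$ is large enough: prescribe at $0$ an even $C^\infty$ jet realizing this normalization (e.g. start from $\phi_0(r)=A\exp(-\tfrac{\lambda_0}{2n}r^2)$ near $0$, which is even, positive, decreasing, with $\phi_0''(0)/\phi_0(0)=-\lambda_0/n$), and bridge it smoothly and monotonically to $\exp(-a_0 r)$ over an intermediate interval; for suitable $A$ and $r_0$ both positivity and the inequality $\phi_0'<0$ survive the bridging.

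The decisive step is then to check that, defining $f$ on $(0,\infty)$ by integrating the identity above (legitimate there since $\phi_0'\neq0$) and normalizing the free multiplicative constant so that $f'(0^+)=1$, the tensor $g:=dr^2+f(r)^2 g_{S^{n-1}(1)}$ is a genuine smooth complete metric on $\mathbb{R}^n$; this is where the normalization of $\phi_0''(0)$ enters. Away from $0$ there is nothing to check. As $r\to0$, since $\phi_0$ is even with $\phi_0''(0)\neq0$ one may write $\phi_0'(r)=r\,\rho(r)$ with $\rho$ even, $C^\infty$, $\rho(0)=\phi_0''(0)\neq0$, while $\phi_0''+\lambda_0\phi_0$ is even and $C^\infty$; hence $r f'/f$ extends to an even $C^\infty$ function $G$ near $0$ with
\[
 G(0)=-\frac{\phi_0''(0)+\lambda_0\phi_0(0)}{(n-1)\,\phi_0''(0)},
\]
which equals $1$ \emph{precisely because} $\phi_0''(0)=-\tfrac{\lambda_0}{n}\phi_0(0)$. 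Then $f'/f-\tfrac1r$ is $C^\infty$ (indeed odd) near $0$, so $f(r)=r\,e^{K(r)}$ with $K$ even and $C^\infty$; choosing the multiplicative constant so that $K(0)=0$ makes $f$ extend to a smooth odd function with $f(0)=0$, $f'(0)=1$, and $g$ extends smoothly across the origin. I expect this parity bookkeeping near $r=0$ — recognising that the single scalar identity $\phi_0''(0)=-\tfrac{\lambda_0}{n}\phi_0(0)$, together with the evenness of $\phi_0$, is exactly what makes the induced warping function define a smooth metric — to be the main obstacle, together with the routine check that the intermediate bridging preserves positivity and monotonicity.

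Finally, $u_0(x):=\phi_0(r(x))$ is, by construction, a $C^\infty$, everywhere positive function on $\mathbb{R}^n$ with $(-\Delta_g-\lambda_0)u_0=0$, $u_0\in L^2(M,v_g)$, and $u_0\equiv\exp(-a_0 r)$ on $\{r\ge r_0\}$. Since a positive $L^2$-solution of an eigenvalue equation on a complete manifold is automatically a ground state — the existence of a positive $\lambda_0$-solution forces $\lambda_0\le\min\sigma(-\Delta_g)$, while $\lambda_0$ being an eigenvalue forces $\lambda_0\ge\min\sigma(-\Delta_g)$ — we conclude $\lambda_0=\min\sigma(-\Delta_g)\in\sigma_{\mathrm{pp}}(-\Delta_g)$, with ground state $u_0$ and $a_0=\tfrac{(n-1)\sqrt\kappa}{2}+\sqrt{\tfrac{(n-1)^2\kappa}{4}-\lambda_0}$. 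This gives (ii).
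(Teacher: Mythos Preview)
Your proof is correct and follows the same overall strategy as the paper: prescribe the radial eigenfunction $\phi_0$ first, then recover the warping function from the identity $f'/f=-\bigl(\phi_0''+\lambda_0\phi_0\bigr)/\bigl((n-1)\phi_0'\bigr)$, and finally verify that the resulting metric is smooth at the origin and that $\phi_0$ is a positive $L^2$-eigenfunction.

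The only substantive difference is how the origin is handled. The paper takes $\phi_0$ on an initial interval $[0,R_0/2]$ to be the radial first Dirichlet eigenfunction $v_1$ of a Euclidean ball whose radius $R_0$ is chosen so that the Dirichlet eigenvalue equals $\lambda_0$; since $v_1''+\tfrac{n-1}{r}v_1'+\lambda_0 v_1=0$, the displayed identity forces $f'/f=1/r$, i.e.\ $f(r)=r$, on $(0,R_0/2]$, and smoothness at the origin is automatic because the metric is literally Euclidean there. Your approach instead imposes only the scalar condition $\phi_0''(0)=-\tfrac{\lambda_0}{n}\phi_0(0)$ together with evenness, and then carries out the parity bookkeeping to show $f(r)=r\,e^{K(r)}$ with $K$ even and $K(0)=0$. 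Both arguments are valid; the paper's shortcut avoids the parity analysis entirely at the cost of invoking a known object (the Bessel-type Dirichlet eigenfunction), while yours is more self-contained. One incidental advantage of the paper's choice is that $r_0=R_0$ visibly depends only on $(\lambda_0,n)$, matching the statement exactly; in your construction the bridging interval is chosen relative to $a_0$, so as written your $r_0$ may also depend on $\kappa$, a cosmetic point you could remove by fixing the transition point first and adjusting the free scale $A$ instead.
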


In case the growth rate of the measure is negative, the important point to note is that we do not require the assumption that the outward normal exponential map induces a diffeomorphism. 
Indeed, we have the following: 
\begin{thm}
Let $(M, g)$ be an $n$-dimensional noncompact complete Riemannian manifold, and $U$ be an {\rm (}not necessarily relatively compact\,{\rm )} open subset with compact $C^{\infty}$-boundary $\partial U$. 
Let $\beta > 0$, $c_1 > 0$, $\delta > 0$, and $0 \le \lambda < \frac{\beta^2}{4}$ be constants, and assume that 
\begin{align*}
  \Delta_g r \le - \beta + \frac{c_1}{r\big( \log (r+1) \big)^{1 + \delta} +1} 
  \qquad 
  {\rm for}\ r \gg 1 \ 
  {\rm on} \ M \backslash (U \cup {\mathcal Cut} (\partial U)). 
\end{align*}
Let $u \in L^2(M\backslash U)$ be a solution of 
$$
  - \Delta_g u - \lambda u = 0 \quad {\rm on}\ M \backslash U. 
$$
Then, we have 
$$
  |u(x)| 
\le 
  c_2 \exp \big( \widetilde{\alpha}\, r(x) \big) \qquad {\rm for}\ x \in M \backslash U, 
$$ where $\widetilde{\alpha} := \frac{\beta}{2}- \sqrt{\frac{\beta^2}{4} - \lambda}$ and $c_2$ are positive constants. 
\end{thm}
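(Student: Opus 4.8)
The plan is to dominate $|u|$ by an explicit positive radial supersolution comparable to $e^{\widetilde{\alpha}r}$ near infinity, and then to transfer this bound from a neighbourhood of $\partial U$ out to infinity using only the hypothesis $u\in L^{2}$; no control on the geometry beyond the given Laplacian estimate, and in particular no information about the normal exponential map, will be needed.

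First I would reduce to $|u|$: since $u$ solves $\Delta_{g}u=-\lambda u$ with $\lambda\ge 0$, Kato's inequality gives $\Delta_{g}|u|\ge-\lambda|u|$ in ${\mathcal D}'(M\backslash U)$, so $|u|$ is a distributional subsolution of $-\Delta_{g}-\lambda$, and it suffices to construct a positive supersolution $\psi$ comparable to $e^{\widetilde{\alpha}r}$. Write $\mu:=\sqrt{\beta^{2}/4-\lambda}>0$, so $\widetilde{\alpha}=\frac{\beta}{2}-\mu$ is the smaller root of $x^{2}-\beta x+\lambda$ and $2\widetilde{\alpha}-\beta=-2\mu<0$; set $\varepsilon(t):=c_{1}\big(t(\log(t+1))^{1+\delta}+1\big)^{-1}$, so the hypothesis reads $\Delta_{g}r\le-\beta+\varepsilon(r)$ for $r\gg 1$ off ${\mathcal Cut}(\partial U)$. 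I would take $\psi:=e^{\widetilde{\alpha}r}\,h(r)$ with $h(t):=\exp\big(K\int_{R_{0}}^{t}\varepsilon(s)\,ds\big)$ for a constant $K>\widetilde{\alpha}/(2\mu)$ and $R_{0}\gg 1$; since $\delta>0$ makes $\int^{\infty}\varepsilon<\infty$, $h$ is increasing with $1\le h\le C_{h}<\infty$ on $[R_{0},\infty)$, hence $e^{\widetilde{\alpha}r}\le\psi\le C_{h}e^{\widetilde{\alpha}r}$ on $\{r\ge R_{0}\}$. Using $|\nabla r|=1$ and $\Delta_{g}\psi=\phi''(r)+\phi'(r)\Delta_{g}r$ for $\phi(t):=e^{\widetilde{\alpha}t}h(t)$ (valid off the cut locus), together with $\phi'>0$ and $\Delta_{g}r\le-\beta+\varepsilon(r)$, one finds on $\{r>R_{0}\}\backslash{\mathcal Cut}(\partial U)$ that $-\Delta_{g}\psi-\lambda\psi\ge-e^{\widetilde{\alpha}r}\big[(\widetilde{\alpha}^{2}-\beta\widetilde{\alpha}+\lambda)h+(2\widetilde{\alpha}-\beta)h'+h''+\varepsilon(\widetilde{\alpha}h+h')\big]$. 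The first term vanishes; substituting $h'=K\varepsilon h$ and $h''=(K\varepsilon'+K^{2}\varepsilon^{2})h$ and dividing the bracket by $\varepsilon h>0$ leaves $\widetilde{\alpha}-2\mu K+K\varepsilon'/\varepsilon+(K^{2}+K)\varepsilon$, which tends to $\widetilde{\alpha}-2\mu K<0$ as $t\to\infty$; so, after enlarging $R_{0}$, $\psi$ is a classical supersolution of $-\Delta_{g}-\lambda$ off ${\mathcal Cut}(\partial U)$. Because $\phi$ is increasing, $\psi$ is in fact a supersolution in ${\mathcal D}'(\{r>R_{0}\})$: the singular part of the distributional $\Delta_{g}r$ is a nonpositive measure carried by ${\mathcal Cut}(\partial U)$, and it enters $\Delta_{g}\psi$ multiplied by $\phi'(r)\ge 0$, so it only strengthens the inequality — this is the step that replaces the diffeomorphism hypothesis of Proposition 1.1 (cf.\ the proof of Lemma 4.1 below).

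Then I would run the comparison. Put $q:=|u|/\psi\ge 0$ on $\{r>R_{0}\}$; $q$ is locally Lipschitz, and a Leibniz computation (from $\psi^{2}\nabla q=\psi\nabla|u|-|u|\nabla\psi$) gives ${\rm div}(\psi^{2}\nabla q)=\psi\Delta_{g}|u|-|u|\Delta_{g}\psi\ge-\lambda\psi|u|+\lambda|u|\psi=0$ in ${\mathcal D}'(\{r>R_{0}\})$, so $q$ is a nonnegative subsolution of the operator ${\rm div}(\psi^{2}\nabla\,\cdot\,)$. Let $c_{0}:=\sup_{\{R_{0}\le r\le R_{0}+2\}}q$, which is finite because that shell is compact ($\partial U$ compact, $M$ complete). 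Testing ${\rm div}(\psi^{2}\nabla q)\ge 0$ against $(q-c_{0})^{+}\chi_{R}^{2}$ — with $\chi_{R}$ Lipschitz, $\chi_{R}\equiv 1$ on $\{r\le R\}$, $\chi_{R}\equiv 0$ on $\{r\ge 2R\}$, $|\nabla\chi_{R}|\le 1/R$, and noting that $(q-c_{0})^{+}$ vanishes on $\{R_{0}\le r\le R_{0}+2\}$ so the test function has compact support in $\{r>R_{0}\}$ — a Caccioppoli estimate yields $\int\psi^{2}\chi_{R}^{2}|\nabla(q-c_{0})^{+}|^{2}\le 4\int\psi^{2}\big((q-c_{0})^{+}\big)^{2}|\nabla\chi_{R}|^{2}\le\frac{4}{R^{2}}\int_{\{R<r<2R\}}|u|^{2}\,dv_{g}$, which tends to $0$ as $R\to\infty$ since $0\le(q-c_{0})^{+}\le q$ and $q\psi=|u|\in L^{2}$. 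Hence $\nabla(q-c_{0})^{+}\equiv 0$ on $\{r>R_{0}\}$, so $(q-c_{0})^{+}$ is constant on each connected component; as it vanishes on the open set $\{R_{0}<r<R_{0}+2\}$, which meets every component of $\{r>R_{0}\}$, we obtain $q\le c_{0}$, i.e.\ $|u|\le c_{0}C_{h}\,e^{\widetilde{\alpha}r}$ on $\{r>R_{0}\}$. Combined with the trivial bound for $|u|$ on the compact set $\{r\le R_{0}\}$ (and $e^{\widetilde{\alpha}r}\ge 1$, since $\widetilde{\alpha}\ge 0$), this gives $|u(x)|\le c_{2}\,e^{\widetilde{\alpha}r(x)}$ on $M\backslash U$.

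I expect the main obstacle to be twofold, and not the (routine) Caccioppoli step. First, obtaining the \emph{sharp} exponent $\widetilde{\alpha}$ rather than $\widetilde{\alpha}+\text{const}$: the error term $\varepsilon(r)$ in the Laplacian hypothesis must be absorbed into the supersolution inequality, and this works precisely because $2\widetilde{\alpha}-\beta=-2\mu<0$ (so $h'$ enters with a negative coefficient) while $\varepsilon$ is integrable — which is exactly why a \emph{bounded} correction $h$ can be used and why $\delta>0$ is indispensable. Second, pushing the comparison through with no control on $\Delta_{g}r$ along ${\mathcal Cut}(\partial U)$ and without any bounded-geometry assumption: the cut locus is neutralised by the favourable sign of the singular part of $\Delta_{g}r$ (using monotonicity of $\psi$ in $r$), and $u$ is controlled at infinity solely through $u\in L^{2}$ inside the Caccioppoli estimate, never through a pointwise decay statement.
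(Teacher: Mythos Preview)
Your proof is correct and follows essentially the same route as the paper: build a positive radial supersolution comparable to $e^{\widetilde{\alpha}r}$ (the paper uses the correction factor $\exp\big(-(\log r)^{-\varepsilon}\big)$ with $0<\varepsilon<\delta$ rather than your $h(r)=\exp\big(K\int^{r}\varepsilon\big)$, but both are bounded and deliver the sharp exponent), upgrade it to a weak supersolution across ${\mathcal Cut}(\partial U)$ via monotonicity in $r$ (this is exactly the role of the paper's Lemma~4.1), and conclude by Agmon's comparison (the paper's Lemma~2.1), whose proof is precisely the Caccioppoli argument you spelled out.
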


For the case of negative growth rates of the measure, other versions which is similar to Proposition 2.1 below can be obtained in the same manner; but we shall omit precise statements here. 

It seems to be interesting to provide an explicit ground state below on an end $[R, \infty) \times N$ with a {\it negative} growth rate $-c$ of the measure: 
\begin{prop}
Let $\lambda$ and $c$ be any positive constants satisfying $0<\lambda < \frac{c^2}{4}$, and $(N, g_N)$ be an any compact connected Riemannian manifold. 
Then, there exists a warped product manifold $(\mathbb{R} \times N, g:= dt^2 + f^2(t)g_N)$ satisfying the following {\rm (i)} and {\rm (ii)} hold\,{\rm :}
\begin{enumerate}[{\rm (i)}]
\item The distance function $r := {\rm dist}_g ( \{0\} \times N, *)$ to the hypersurface $\{0\} \times N$ satisfies 
$$
  \Delta_g r =
\begin{cases}
  - c \quad & {\rm on}\ [R, \infty) \\
\ \ c \quad & {\rm on}\ (-\infty, 0],
\end{cases}
$$
where $R=R(\lambda)>0$ is a constant. 
In particular, $\sigma_{\rm ess}(-\Delta_g) = [\frac{c^2}{4}, \infty)$. 
\item There exists a ground state $\varphi$ of $-\Delta_g$ on $L^2(\mathbb{R} \times N, v_g)$ with eigenvalue $\lambda$ satisfying
$$
  \varphi (x) = 
\begin{cases}
  \exp \Big( \Big( \frac{c}{2} - \sqrt{\frac{c^2}{4} - \lambda} \,\Big) r(x) \Big) \quad & {\rm for}\ x \in [R, \infty) \times N\\
 c_1 \exp \Big( - \Big( \frac{c}{2} + \sqrt{\frac{c^2}{4} - \lambda} \, \Big) r(x)\Big)  \quad & {\rm for}\ x \in (-\infty, 0] \times N,
\end{cases}
$$
where $c_1$ is a positive constant. 
\end{enumerate}
\end{prop}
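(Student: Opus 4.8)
The plan is to construct the warping function $f$ \emph{backwards} from the desired eigenfunction: I first prescribe the would-be ground state $\varphi=\varphi(t)$ as an explicit positive, strictly increasing function of the coordinate $t\in\mathbb{R}$ alone, and then choose $f$ so that this $\varphi$ automatically solves $(-\Delta_g-\lambda)\varphi=0$. Throughout write $m:=\dim N=n-1$, and recall that on $(\mathbb{R}\times N,\,dt^2+f^2g_N)$ the volume density is $f^m\,dt\,dv_{g_N}$, that $\Delta_g t=mf'/f$ (so $\Delta_g u=u''+(mf'/f)u'$ for $u$ a function of $t$ alone), and that the slices $s\mapsto(s,x)$, $x\in N$, are unit-speed geodesics. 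Put $\mu_{\pm}:=\tfrac c2\pm\sqrt{\tfrac{c^2}{4}-\lambda}$, so $0<\mu_-<\mu_+$, $\mu_++\mu_-=c$ and $\mu_+\mu_-=\lambda$; in particular $\mu_{\pm}^{2}+\lambda=c\,\mu_{\pm}$, i.e.\ $e^{\mu_{\pm}t}$ solves $y''-cy'+\lambda y=0$.

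For the construction I would fix any constant $R>0$, choose $\psi\in C^{\infty}(\mathbb{R})$ interpolating smoothly (with all derivatives matching at the endpoints) from $\psi\equiv\mu_+$ on $(-\infty,0]$ down to $\psi\equiv\mu_-$ on $[R,\infty)$, with $\mu_-\le\psi\le\mu_+$ throughout, and set $\varphi(t):=\exp\big(\mu_-R-\int_0^R\psi\big)\,\exp\big(\int_0^t\psi(s)\,ds\big)$, $h:=-(\psi'+\psi^{2}+\lambda)/\psi$ and $f(t):=\exp\big(\tfrac1m\int_0^t h(s)\,ds\big)$. Then $\varphi>0$, $\varphi'=\psi\varphi>0$, $\varphi(t)=c_1e^{\mu_+t}$ for $t\le0$ with $c_1:=\varphi(0)>0$, and $\varphi(t)=e^{\mu_-t}$ for $t\ge R$; since $\psi\ge\mu_->0$ the function $h$ is smooth, and the identity $\mu_{\pm}^{2}+\lambda=c\mu_{\pm}$ forces $h\equiv-c$, hence $mf'/f\equiv-c$, on $(-\infty,0]\cup[R,\infty)$. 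Taking $M:=\mathbb{R}\times N$ with $g:=dt^2+f^2g_N$ then yields a smooth complete Riemannian manifold --- completeness because the projection $M\to\mathbb{R}$ onto the $t$-factor is $1$-Lipschitz, so every metric ball lies in a compact slab $\{|t|\le\rho\}\times N$.

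For part (i): the slice argument (the projection $M\to\mathbb{R}$ is $1$-Lipschitz, and the slices realize the distance) gives $r:=\operatorname{dist}_g(\{0\}\times N,\cdot)=|t|$, whence $\Delta_g r=\operatorname{sgn}(t)\,\Delta_g t=\operatorname{sgn}(t)\,h(t)$, which equals $-c$ on $[R,\infty)\times N$ and $+c$ on $(-\infty,0]\times N$ by the preceding paragraph. For $\sigma_{\rm ess}(-\Delta_g)=[\tfrac{c^2}{4},\infty)$ I would invoke the decomposition principle --- split $M$ into the compact slab $[0,R]\times N$ and the two ends --- together with a separation of variables on each end: since $mf'/f\equiv-c$ there, decomposing along the eigenfunctions of $-\Delta_{g_N}$ (eigenvalues $0=\nu_0<\nu_1\le\cdots$, using that $N$ is compact) and substituting $u=e^{ct/2}w$ renders the $\nu_j$-mode unitarily equivalent to $-w''+\big(\tfrac{c^2}{4}+\nu_j f^{-2}\big)w$ on a half-line with Dirichlet condition; the potential $\nu_j f^{-2}$ blows up at the $t\to+\infty$ end and decays to $0$ at the $t\to-\infty$ end, so by Persson's theorem only the mode $\nu_0=0$ contributes essential spectrum, namely $[\tfrac{c^2}{4},\infty)$, from each end. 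For (ii), since $\varphi=\varphi(t)$ we have $-\Delta_g\varphi=-\varphi''-h\varphi'$, and $h\varphi'=h\psi\varphi=-(\psi'+\psi^{2}+\lambda)\varphi=-(\varphi''+\lambda\varphi)$ (from $\varphi'=\psi\varphi$, $\varphi''=(\psi'+\psi^{2})\varphi$), so $-\Delta_g\varphi=\lambda\varphi$ with $\varphi>0$; the exponents chosen above make $\varphi^{2}v_g$ integrable on each end --- a constant times $e^{(2\mu_--c)t}\,dt\,dv_{g_N}$ on $[R,\infty)\times N$ with $2\mu_--c=-2\sqrt{c^2/4-\lambda}<0$, and a constant times $e^{(2\mu_+-c)t}\,dt\,dv_{g_N}$ on $(-\infty,0]\times N$ with $2\mu_+-c=2\sqrt{c^2/4-\lambda}>0$ --- while $\varphi$ is bounded on the slab, so $\varphi\in L^2(M,v_g)$. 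Hence $\lambda$ is an eigenvalue and $\lambda\ge\min\sigma(-\Delta_g)$; conversely the existence of the positive solution $\varphi$ of $(-\Delta_g-\lambda)\varphi=0$ forces $\lambda\le\min\sigma(-\Delta_g)$ (Fischer--Colbrie--Schoen), so $\lambda=\min\sigma(-\Delta_g)$ and $\varphi$ is a ground state of exactly the asserted piecewise form.

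The only genuinely technical step is the essential-spectrum computation: one must handle infinitely many transverse modes on \emph{both} ends --- one volume-collapsing, the other volume-expanding --- and check that neither produces spectrum below $\tfrac{c^2}{4}$, which is precisely where the conjugation $u=e^{ct/2}w$ and the sign of the resulting potential enter; everything else reduces to the elementary ODE construction above. It is worth noting in the write-up \emph{why} the transition slab $[0,R]$ is unavoidable: if $f$ were a constant multiple of $e^{-ct/m}$ on a whole neighborhood of $t=0$, then near $t=0$ every solution of $(-\Delta_g-\lambda)\varphi=0$ would be a linear combination of $e^{\mu_+t}$ and $e^{\mu_-t}$, and no single such exponential can match \emph{both} prescribed boundary forms in value and first derivative; the length $R$ is exactly the room needed to turn the logarithmic derivative $\varphi'/\varphi$ from $\mu_+$ into $\mu_-$.
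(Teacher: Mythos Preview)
Your proposal is correct and follows essentially the same route as the paper: prescribe the positive, strictly increasing eigenfunction $\varphi$ first (with the required exponential forms on $(-\infty,0]$ and $[R,\infty)$) and then \emph{define} the warping function by $f(t)=\exp\big(-\tfrac{1}{n-1}\int_0^t(\varphi''+\lambda\varphi)/\varphi'\big)$, which is exactly your $\exp\big(\tfrac1m\int_0^t h\big)$ once one unwinds $h=-(\psi'+\psi^{2}+\lambda)/\psi$ with $\psi=\varphi'/\varphi$. The only cosmetic difference is that the paper fixes the transition piece of $\varphi$ concretely as $\sin(\sqrt{\lambda}\,t)$ on a subinterval instead of interpolating the logarithmic derivative abstractly, and it relegates the checks of $\sigma_{\rm ess}=[c^{2}/4,\infty)$, $L^{2}$-membership, and minimality of $\lambda$ to ``direct computations'', whereas you spell them out.
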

In order to apply theorems stated above, the discrete spectrum must exists below the essential spectrum; Theorem 1.4 below ensures its existence under compatible measure growth rates with those stated in Theorems above: 
\begin{thm}
Let $(M, g)$ be a complete Riemannian manifold and $W$ be a relatively compact open subset of $M$ with $C^{\infty}$-boundary $\partial W$. 
Set $r:={\rm dist}_g(W, *)$ on $M\backslash W$. 
Assume that, there exist constants $0 \neq c \in {\mathbb R}$, $\delta >0$, and $r_1>0$ such that
\begin{align*}
  \Delta_g r 
\le c - \frac{1+\delta}{2cr^2} 
  \quad {\rm on}\ \big\{ x \in M\backslash \big(W\cup {\mathcal Cut}(\partial W) \big) \mid r(x) \ge r_1 \big\}.
\end{align*}
Here, ${\mathcal Cut}(\partial W)$ stands for the cut locus of the Fermi coordinates $\exp^{\perp}_{\partial W}: {\mathcal N} ^+(\partial W) \to M\backslash W$ based on $\partial W$. 
Assume that $\min (\sigma_{\rm ess}(-\Delta_g)) = \frac{c^2}{4}$. 
Then, the discrete spectrum of $-\Delta_g$ below the constant $\frac{c^2}{4}$ is infinite\hspace{0.1mm}{\rm :} $\sharp \Big( \sigma_{\rm disc} (-\Delta_g)) \cap (0, \frac{c^2}{4}) \Big) = \infty$, where the symbol $\,\sharp$ represents the cardinality. 
\end{thm}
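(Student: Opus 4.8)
The plan is to construct infinitely many linearly independent test functions supported in disjoint annular regions near infinity, each of which makes the Rayleigh quotient strictly less than $\frac{c^2}{4}$, and then invoke the min-max principle. The point of the hypothesis $\Delta_g r \le c - \frac{1+\delta}{2cr^2}$ is that it just barely fails to control the would-be "critical" barrier $\exp(-\frac{c}{2}r)$; the extra term $-\frac{1+\delta}{2cr^2}$ produces, when one plugs a suitably modified ansatz into the equation, a \emph{negative} effective potential that decays like $r^{-2}$, and a Hardy-type obstruction shows that such a potential drags the bottom of the spectrum strictly below $\frac{c^2}{4}$. So the engine is really a one-dimensional computation in the variable $r$, transplanted to $M\backslash W$ via the coarea formula.

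First I would fix the Fermi coordinates $\exp^{\perp}_{\partial W}$ on $M\backslash(W\cup{\mathcal Cut}(\partial W))$ and write $v_g = J(r,w)\,dr\,dv_{\partial W}$, so that $\partial_r \log J = \Delta_g r$ along each normal geodesic; the curvature hypothesis becomes $\partial_r\log J \le c - \frac{1+\delta}{2cr^2}$ for $r\ge r_1$. For a function $\psi$ of $r$ alone, the Rayleigh quotient on an annulus $A(a,b)=\{a<r<b\}$ is $\int \psi'(r)^2 J\,dr\,dv_{\partial W} \big/ \int \psi(r)^2 J\,dr\,dv_{\partial W}$. Next I would substitute $\psi(r) = e^{-cr/2}\phi(r)$; integrating by parts in $r$ and using $\partial_r\log J\le c-\frac{1+\delta}{2cr^2}$ together with the vanishing of $\phi$ at the endpoints, the quotient is bounded above by $\frac{c^2}{4} + \Big(\int_a^b \phi'^2 e^{-cr}J\,dr - \frac{1+\delta}{4}\int_a^b \frac{\phi^2}{r^2} e^{-cr}J\,dr\Big)\big/\int_a^b \phi^2 e^{-cr}J\,dr$ modulo boundary terms that vanish. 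Thus it suffices to find $\phi$ compactly supported in $(a,b)$ with $\int \phi'^2 e^{-cr}J < \frac{1+\delta}{4}\int \frac{\phi^2}{r^2} e^{-cr}J$. Absorbing $e^{-cr}J$ into the measure and noting $\frac14 < \frac{1+\delta}{4}$, this is exactly the failure of the sharp one-dimensional Hardy inequality with constant $\frac14$ on a long enough interval: choosing $\phi$ to be (a smooth cutoff of) $r^{1/2}$ on a geometric annulus $[e^{k},e^{k+1}]$ and letting the annulus be long enough gives the strict inequality, uniformly as $k\to\infty$ after normalizing.

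Then I would take a sequence of disjoint annuli $A_k = A(R_k, R_{k+1})$ with $R_k\to\infty$, each long enough (on a logarithmic scale) that the above construction yields a function $\psi_k$ supported in $A_k$ with Rayleigh quotient $<\frac{c^2}{4}-\epsilon_k$ for some $\epsilon_k>0$; these $\psi_k$ are automatically $L^2$-orthogonal and in the form domain of $-\Delta_g$ (extend by zero). For each $N$, the span of $\psi_1,\dots,\psi_N$ is an $N$-dimensional subspace on which the Rayleigh quotient is $<\frac{c^2}{4}$, so by the min-max principle $-\Delta_g$ has at least $N$ eigenvalues below $\frac{c^2}{4} = \min\sigma_{\rm ess}(-\Delta_g)$; since $N$ is arbitrary, $\sigma_{\rm disc}(-\Delta_g)\cap(0,\frac{c^2}{4})$ is infinite. (Positivity of these eigenvalues is clear since $-\Delta_g\ge 0$ and $0$ is not an eigenvalue: an $L^2$ harmonic function on a complete manifold with nonempty essential spectrum at $\frac{c^2}{4}>0$ is constant, hence zero.)

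The main obstacle I anticipate is bookkeeping the boundary terms and the cut locus: the Fermi coordinates are singular on ${\mathcal Cut}(\partial W)$, so the integrations by parts in $r$ must be justified either by restricting to the star-shaped region before the cut locus (using that $J$ and its $r$-derivative extend as an $L^1_{\rm loc}$ distribution with the stated one-sided bound in the barrier sense, as in the comparison arguments underlying Proposition 1.1), or by a standard approximation exhausting $M\backslash W$ by smooth domains and controlling the error. One must also check that the coefficient $\frac{1+\delta}{2cr^2}$ — note the precise constant $\frac{1}{2c}$, tuned so that after the $e^{-cr/2}$ substitution it becomes exactly $\frac{1+\delta}{4r^2}$ — is what makes the Hardy constant beat $\frac14$; getting this normalization right is the only genuinely delicate point in the otherwise routine variational estimate.
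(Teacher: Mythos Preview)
Your plan is essentially the paper's own strategy: the paper proves the model computation as Proposition~6.1, using the test function $h(r)=(\sqrt{g^W}(r))^{-1/2}\chi(r)\,r^{1/2}$ on annuli $[R,2kR]$ with a trapezoidal cutoff $\chi$, and shows $\int\big[(\partial_r h)^2-\tfrac{c^2}{4}h^2\big]\sqrt{g^W}\,dr<0$ once $R$ and $\log k$ are large; it then invokes the min-max argument of \cite{K3}, Section~3, to pass to general $(M,g)$ with the inequality hypothesis. Your substitution $\psi=e^{-cr/2}\phi$ with $\phi\sim r^{1/2}$ is the natural analogue (since $\sqrt{g^W}\sim e^{cr}$ in the model), and your reduction to a weighted Hardy defect $\int\phi'^2\mu-\tfrac{1+\delta}{4}\int\tfrac{\phi^2}{r^2}\mu$ with $\mu=e^{-cr}J$ is exactly what drives the computation.

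There is one genuine gap in your outline. Your key step is to integrate $-c\int\phi\phi'\,\mu$ by parts to produce $\tfrac{c}{2}\int\phi^2(\Delta_g r-c)\,\mu$ and then apply the hypothesis $\Delta_g r-c\le -\tfrac{1+\delta}{2cr^2}$. This yields the desired bound $\tfrac{c}{2}(\Delta_g r-c)\le -\tfrac{1+\delta}{4r^2}$ only when $c>0$; for $c<0$, multiplying the inequality by $\tfrac{c}{2}<0$ reverses it, and the boundary contribution at the cut locus likewise acquires the wrong sign. The theorem explicitly allows $c<0$ (the paper stresses this after stating Theorem~1.4), so your argument as written does not cover that case. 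The paper sidesteps this in Proposition~6.1 by working with the \emph{equality} $\Delta_g r=c-\tfrac{1+\delta}{2cr^2}$ in the rotationally symmetric model, where the quadratic term $\tfrac14(\Delta_g r)^2$ appears and both signs of $c$ are treated symmetrically; the passage to the inequality on general $M$ is deferred to \cite{K3}. You should either restrict to $c>0$, or check that the comparison argument from \cite{K3} (rather than your direct substitution) handles the negative case.
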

Theorem 1.4 slightly generalizes [10, Theorem 1.2] in the following sense: first, we assume the measure growth rate condition instead of the Ricci curvature condition; Note that, if $c<0$, the growth rate $\Delta_g r$ of the measure is negative in Theorem 1.4 and Proposition 6.1 below.

\section{Proof of Proposition 1.1}
In this section, we shall prove Proposition 2.1 below, which includes Proposition 1.1. 

\begin{prop}
\begin{sloppypar}
Let $(M,g)$ be an $n$-dimensional noncompact Riemannian manifold and $U$ be an {\rm (}not necessarily relatively compact\,{\rm )} open subset of $M$. 
Assume that the boundary $\partial U$ is compact, connected, and $C^{\infty}$ and that the outward normal exponential map $\exp_{\partial U}^{\perp} : N^{+}(\partial U) \to E : =M \backslash U$ induces a diffeomorphism, where $N^{+}(\partial U) = \{ v\in TM|_{\partial U} \mid v {\rm ~is~outward~normal~to~}\partial U \}$. 
Let $\nu$ be the outward unit normal vector field along $\partial E$, and denote $r(x) := {\rm dist}_g(\partial U, x)$ for $x \in E$. 
Then, $[0, \infty) \times \partial E \ni (r, w) \mapsto \exp _w(r\nu_w) \in E$ is a coordinates on $E$. 

Let $c>0$ be a constant, and assume that $u \in L^2 (E, v_g)$ is a solution of the eigenvalue equation 
\begin{align}
( -\Delta_g - \lambda )u = 0 \quad {\rm on}~~E, \tag{2}
\end{align}
where $0 \le \lambda < \frac{c^2}{4}$ is a constant. 
For simplicity, we shall set $\alpha := \frac{c}{2}+\sqrt{\frac{c^2}{4}-\lambda}$. 
\begin{enumerate}[{\rm (i)}]
\item If $\displaystyle \liminf_{r \to \infty} \Delta_g r \ge c$ and if $\beta_1$ is a constant satisfying 
$$
  \frac{c}{2}-\sqrt{\frac{c^2}{4}-\lambda} < \beta_1 < \frac{c}{2}+\sqrt{\frac{c^2}{4}-\lambda}, 
$$
then $\exp (- \beta_1 r)$ is a supersolution of $(2)$ near the infinity of $E$. 
In particular, there exists a constant $C_1>0$, depending on $\beta_1$, such that 
$$
  |u(x)| \le C_1 \exp \big(-\beta_1 r(x) \big) \quad {\rm for}~~x \in E.
$$
\item If $\displaystyle \lim_{r \to \infty} \Delta_g r = c$ uniformly with respect to $w \in \partial E$ and if $\beta_2$ is a constant satisfying 
$$
  0 \le \beta_2 < \frac{c}{2} - \sqrt{\frac{c^2}{4}-\lambda} \quad {\rm or} \quad \beta_2 > \frac{c}{2} + \sqrt{\frac{c^2}{4} - \lambda}, 
$$
then $\exp (- \beta_2 r)$ is a subsolution of $(2)$ near the infinity of $E$. 
\item If there exist constants $\beta_3 > 0$ and $\varepsilon > 0$ satisfying $\displaystyle c - \frac{\beta_3}{r(\log r)^{1 + \varepsilon}} \le \Delta_g r$ for $r \gg 1$, then there exists a constant $C_2 > 0$ such that 
$$
  |u(x)| \le C_2 \exp \big( -\alpha r(x) \big) \quad {\rm for}~~x \in E.
$$
\item If there exist a constant $\beta_4 > 0$ satisfying $\displaystyle c - \frac{\beta_4}{r\log r} \le \Delta_g r$ for $r \gg 1$, then there exists a constant $C_3 > 0$ such that 
$$
  |u(x)| \le C_3 (\log (r(x)+2))^{\frac{\alpha \beta_4}{2\alpha - c}} \exp \big( -\alpha r(x) \big) \quad {\rm for}~~x \in E.
$$
\item If there exist constants $\beta_5 >0$ and $0<\theta<1$ satisfying $\displaystyle c - \frac{\beta_5}{r(\log r)^{\theta}} \le \Delta_g r$ for $r \gg 1$, then there exists a constant $C_4 > 0$ such that 
$$
  |u(x)| \le C_4 \exp \Big( -\alpha r(x) + \frac{\alpha \beta_5}{(2\alpha-c)(1-\theta)} (\log (r(x)+1))^{1-\theta} \Big) \quad {\rm for}~~x \in E.
$$
\item If there exists a constant $0 < \theta \le 1$ satisfying $\displaystyle c - \frac{(2\alpha -c)\theta}{\alpha r} \le \Delta_g r$ for $r \gg 1$, then there exists a constant $C_5 > 0$ such that 
$$
  |u(x)| \le C_5 (r(x)+1)^{\theta } \exp \big( -\alpha r(x) \big) \quad {\rm for}~~x \in E.
$$
\item If there exist constants $\beta_6 > 0$, $0 < \theta < 1$, and $\varepsilon > 0$ satisfying $\displaystyle c - \frac{(2\alpha -c)(1-\theta)\beta_6 - \varepsilon}{\alpha r^{\theta}} \le \Delta_g r$ for $r \gg 1$, then there exists a constant $C_6 > 0$ such that 
$$
  |u(x)| \le C_6 \exp \big( -\alpha r(x) + \beta_6 r(x)^{1-\theta} \big) \quad {\rm for}~~x \in E.
$$
\item If there exist constants $\theta > 0$ and $\varepsilon > 0$ satisfying $\displaystyle c + \frac{(2\alpha -c)\theta + \varepsilon}{\alpha r\log r} \le \Delta_g r$ for $r \gg 1$, then there exists a constant $C_7 > 0$ such that 
$$
  |u(x)| \le C_7 \big( \log (r(x) + 2) \big)^{-\theta} \exp \big( -\alpha r(x) \big) \quad {\rm for}~~x \in E.
$$
\item If there exist constants $\beta_7 > 0$, $0 < \theta < 1$, and $\varepsilon>0$ satisfying $\displaystyle c + \frac{(2\alpha -c)(1 - \theta)\beta_7 + \varepsilon}{\alpha r (\log r)^{\theta}} \le \Delta_g r$, then there exists a constant $C_8 > 0$ such that 
$$
  |u(x)| \le C_8 \exp \big( - \alpha r(x) - \beta_7 (\log (r(x) +1))^{1-\theta} \big) \quad {\rm for}~~x \in E.
$$
\item If there exist constants $\theta > 0$ and $\varepsilon > 0$ satisfying $\displaystyle c + \frac{(2\alpha -c)\theta + \varepsilon}{\alpha r} \le \Delta_g r$, then there exists a constant $C_9 > 0$ such that 
$$
  |u(x)| \le C_9 r^{-\theta} \exp \big( - \alpha r(x) \big) \quad {\rm for}~~x \in E.
$$
\item If there exist constants $\beta_8 > 0$, $0 < \theta < 1$, and $\varepsilon>0$ satisfying $\displaystyle c + \frac{(2\alpha -c)(1-\theta)\beta_8 + \varepsilon}{\alpha r^{\theta}} \le \Delta_g r$, then there exists a constant $C_{10} > 0$ such that 
$$
  |u(x)| \le C_{10} \exp \big( - \alpha r(x) - \beta_8 r(x)^{1-\theta} \big) \quad {\rm for}~~x \in E.
$$
\end{enumerate}
\end{sloppypar}
\end{prop}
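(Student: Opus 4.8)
The plan is to prove all of (i)--(xi) simultaneously by constructing radial barriers near the infinity of $E$. In the Fermi coordinates $(r,w)$ a radial function $\phi=\phi(r)>0$ satisfies $\Delta_g\phi=\phi''+(\Delta_g r)\,\phi'$; writing $\phi=\exp\!\big(\int_{r_0}^{r}\xi\big)$ with $\xi=(\log\phi)'$ therefore gives
\[
(-\Delta_g-\lambda)\,\phi=-\big(\xi'+\xi^{2}+(\Delta_g r)\,\xi+\lambda\big)\,\phi .
\]
Thus $\phi$ is a supersolution of $(2)$ on $\{r>R_0\}$ exactly when $\xi$ satisfies there the Riccati inequality $\xi'+\xi^{2}+(\Delta_g r)\,\xi+\lambda\le0$, and a subsolution when the reverse inequality holds. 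The proof then splits into two independent parts: (a) a comparison principle turning a positive supersolution near infinity into the bound $|u|\le C\phi$; and (b) the explicit construction, for each of the eleven hypotheses on $\Delta_g r$, of a $\xi$ solving the appropriate Riccati (in)equality whose primitive $\phi$ realizes the asserted bound.

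For (a) I would fix $R_0$ so large that $\phi>0$ is a supersolution of $(2)$ on $\{r>R_0\}$ and set $v:=u/\phi$; from $(-\Delta_g-\lambda)u=0$ one obtains the identity $\operatorname{div}\!\big(\phi^{2}\nabla v\big)=q\,\phi\,v$ with $q:=(-\Delta_g-\lambda)\phi\ge0$. Choosing $M\ge\sup_{\{r=R_0\}}|v|$ (finite, by interior elliptic regularity and $\phi>0$), taking a radial cutoff $\eta$ equal to $1$ on $\{r\le R\}$ and vanishing off $\{r<2R\}$, and pairing the identity with $\eta^{2}(v-M)_{+}$, the boundary term on $\{r=R_0\}$ drops, the contribution of $q\phi v$ is $\ge0$ on $\operatorname{supp}(v-M)_{+}$, and Cauchy--Schwarz yields
\[
\int_{\{r>R_0\}}\phi^{2}\eta^{2}\,\big|\nabla(v-M)_{+}\big|^{2}\,dv_g\;\le\;\frac{C}{R^{2}}\int_{\{R<r<2R\}}u^{2}\,dv_g\;\xrightarrow[R\to\infty]{}\;0
\]
since $u\in L^{2}(E,v_g)$. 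Hence $(v-M)_{+}\equiv0$ on the connected end $\{r>R_0\}$, i.e.\ $|u|\le M\phi$ there, and with the boundedness of $u$ on the compact set $\{r\le R_0\}$ this gives the pointwise estimate on all of $E$. Note that this step uses no curvature hypothesis. (For the sub/supersolution claims in (i)--(ii), only the Riccati computation is needed and there is nothing to compare.)

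For (b), note that outside (ii) each hypothesis forces $\liminf_{r\to\infty}\Delta_g r\ge c$, and I would try $\xi=-\alpha+\psi'$ with $\psi'\to0$ (for (i), simply $\xi=-\beta_{1}$). Using $\alpha^{2}-c\alpha+\lambda=0$ and $2\alpha-c=2\sqrt{c^{2}/4-\lambda}>0$ one finds
\[
\xi'+\xi^{2}+(\Delta_g r)\,\xi+\lambda=(c-\Delta_g r)\,(\alpha-\psi')-(2\alpha-c)\,\psi'+\psi''+(\psi')^{2}.
\]
Writing each of (iii)--(xi) as a one-sided estimate $c-\Delta_g r\le\delta(r)$ (a deficit $\delta>0$ in (iii)--(vii), a surplus $\delta<0$ in (viii)--(xi)) and using $\alpha-\psi'>0$ for $r\gg1$, the right-hand side is $\le\alpha\delta-(2\alpha-c)\psi'+\big(-\delta\psi'+\psi''+(\psi')^{2}\big)$. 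Choosing $\psi'$ to reproduce the claimed correction factor makes the leading term $\alpha\delta-(2\alpha-c)\psi'$ nonpositive, indeed strictly negative and of order $\delta$ whenever the hypothesis carries a margin $\varepsilon>0$ (cases (vii)--(xi)), in which case it swamps the $o(\delta)$-remainder; in the remaining cases (iii)--(vi) one has $\psi'=\tfrac{\alpha}{2\alpha-c}\delta$, the leading term vanishes, and the remainder collapses to $\tfrac{\alpha}{2\alpha-c}\delta'+\tfrac{\alpha(c-\alpha)}{(2\alpha-c)^{2}}\delta^{2}$, which is $<0$ for $r\gg1$ — immediately in (iii)--(v) because $|\delta'|\gg\delta^{2}$, and in (vi), where $|\delta'|\asymp\delta^{2}$, because the sharp choice of $\delta$ gives the inequality $(c-\alpha)\theta/\alpha<1$, valid precisely for $\theta\le1$. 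Integrating $\psi'$ yields $\phi=\exp(-\alpha r+\psi)$ with $\psi$ bounded in (iii), $\psi\sim\tfrac{\alpha\beta_{4}}{2\alpha-c}\log\log r$ in (iv), $\psi\sim\tfrac{\alpha\beta_{5}}{(2\alpha-c)(1-\theta)}(\log r)^{1-\theta}$ in (v), $\psi\sim\theta\log r$ in (vi), $\psi\sim\beta_{6}r^{1-\theta}$ in (vii), $\psi\sim-\theta\log\log r$ in (viii), $\psi\sim-\beta_{7}(\log r)^{1-\theta}$ in (ix), $\psi\sim-\theta\log r$ in (x), and $\psi\sim-\beta_{8}r^{1-\theta}$ in (xi) — exactly the factors in the statement. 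Finally, for (i), $\xi=-\beta_{1}$ gives $\xi'+\xi^{2}+(\Delta_g r)\xi+\lambda=-(\beta_{1}^{2}-c\beta_{1}+\lambda)+\beta_{1}(\Delta_g r-c)$, whose first term is $>0$ by the choice of $\beta_{1}$ and whose second is $\ge o(1)$; while for the subsolution claim in (ii) the same $\xi=-\beta_{2}$ produces this expression with first term $<0$ and $\Delta_g r-c\to0$.

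The main obstacle I anticipate is twofold. First, the comparison step (a) must cover an \emph{arbitrary} $L^{2}$-solution on a manifold carrying no geometric control, so no volume doubling, no uniform Moser estimate, and no global spectral gap is available; the weighted Caccioppoli inequality above is the device that circumvents this, needing only interior elliptic regularity and the sign $q\ge0$. Second, the Riccati verification in (b) has to be carried to the correct asymptotic order uniformly over all eleven profiles, and in the sharp, margin-free case (vi) — and to a lesser extent (iii)--(v) — there is no slack: one must know that the curvature term $\psi''$ (of size $|\delta'|$) and the quadratic error $(\psi')^{2}$ (of size $\delta^{2}$) combine with the right sign, which is exactly what pins down the constants appearing in the bounds. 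Everything else is bookkeeping, carried out entirely within the Fermi chart, which is global here because $\exp_{\partial U}^{\perp}$ is assumed to be a diffeomorphism; the relaxation of that hypothesis is the content of Lemma~4.1.
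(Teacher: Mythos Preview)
Your proof is correct and takes essentially the same approach as the paper: your comparison step (a) is precisely Agmon's Lemma~2.1 (which the paper cites, and whose proof in Lemma~5.1 uses the same weighted inequality $\int \phi^{2}|\nabla(u_{0}/\phi)|^{2}\le\int|\nabla\zeta|^{2}u_{0}^{2}$ you derive), and your barrier construction (b) via the Riccati substitution $\xi=(\log\phi)'$ is a systematic repackaging of the paper's direct computation of $(-\Delta_{g}-\lambda)\exp(-\alpha r+\psi)$, carried out there only for case (iii) with $\psi=-\delta_{1}(\log r)^{-\varepsilon}$ and the remaining cases declared analogous. Your unified Riccati bookkeeping is tidier, but the method is the same.
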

\vspace{2mm}

In order to prove Proposition 2.1, we shall introduce some terminology. 
Let $(M, g)$ be an $n$-dimensional noncompact complete Riemannian manifold and $\Omega$ be an open subset of $M$. 
We shall consider a solution $u$ of the eigenvalue equation
\begin{align}
  ( - \Delta_g - \lambda ) u = 0 
\end{align}
on $\Omega$, where $\lambda \ge 0$ is a constant. 
By a supersolution of $(4)$, we mean a real-valued function $u$ in $H_{\rm loc}^1(\Omega)$ satisfying 
\begin{align*}
  \int_{\Omega} \left\{ \langle \nabla u, \nabla \varphi \rangle - \lambda u \varphi \right\} \, dv_g \ge 0
\end{align*}
for every nonnegative-valued function $\varphi \in C^{\infty}_0 (\Omega)$; analogously, by a subsolution of $(4)$, we mean a real-valued function $u$ in $H_{\rm loc}^1(\Omega)$ such that
\begin{align*}
  \int_{\Omega} \left\{ \langle \nabla u, \nabla \varphi \rangle - \lambda u \varphi \right\} \, dv_g \le 0
\end{align*}
for every nonnegative-valued function $\varphi \in C^{\infty}_0 (\Omega)$. 
We shall recall the following:
\begin{lem}[S.~Agmon \cite{A}]
Assume that $\Omega$ is unbounded and that the boundary $\partial \Omega$ of $\Omega$ is connected and compact. 
We shall denote $r={\rm dist}_g(\partial \Omega,*)$ on $\Omega$. 
Let $w$ be a positive-valued continuous supersolution of $(4)$ on $\Omega$ and $v$ be a continuous subsolution of $(4)$ on $\Omega$. 
Assume that there exists a constant $\alpha >1$ such that
\begin{align*}
\liminf_{R\to \infty} \frac{1}{R^2} \int_{E(R, \alpha R)} v^2 \,dv_g = 0, \end{align*}
where we set $E(s, t):=\{ x \in \Omega \mid s < r(x) < t \}$ for $0<s<t$. 
Then, there exists a positive constant $C$ such that 
\begin{align*}
  v(x) \le C w(x) \quad 
  {\rm on}~\{ x \in \Omega \mid r(x) \ge 1 \}. 
\end{align*}
\end{lem}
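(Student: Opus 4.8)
The plan is to compare $v$ and $w$ through their ratio $\sigma:=v/w$, which is meaningful since $w$ is continuous and strictly positive, by means of an Agmon-type weighted Caccioppoli estimate. First I would fix $k:=\sup_{\{r\le1\}}(v/w)$, which we may assume finite (it is in the settings of interest, where $v$ and $w$ extend continuously up to the smooth compact boundary $\partial\Omega$) and nonnegative. It suffices to show $v\le k\,w$ on $\{r\ge1\}$, for then the lemma holds with $C:=\max\{k,1\}$. Three elementary observations will be used: $r$ is unbounded on $\Omega$, so every annulus $E(R,\alpha R)$ is nonempty; $\{r\le R\}\nearrow\Omega$ as $R\to\infty$; and every connected component of $\Omega$ reaches $\partial\Omega$ and hence meets $\{r<1\}$.

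The key ingredient is the \emph{ground state substitution}. Put $p:=(v-kw)_+$. Then $v-kw$ is still a subsolution of $(4)$, because $-\Delta_g(v-kw)-\lambda(v-kw)=(-\Delta_g v-\lambda v)-k(-\Delta_g w-\lambda w)\le0$ (using $k\ge0$ and that $w$ is a supersolution); hence $p$, the positive part of a subsolution of $(4)$, is a subsolution of $(4)$ as well. Since $p\ge0$, for every $0\le\phi\in C_0^\infty(\Omega)$ the products $w\phi$ and $p\phi$ are nonnegative $H^1$-functions with compact support, hence admissible in the weak inequalities by density. Combining the Leibniz identity
\[
  \int_\Omega w^2\langle\nabla(p/w),\nabla\phi\rangle\,dv_g
  =\int_\Omega\langle\nabla p,\nabla(w\phi)\rangle\,dv_g-\int_\Omega\langle\nabla w,\nabla(p\phi)\rangle\,dv_g
\]
with the subsolution inequality for $p$ (tested against $w\phi$) and the supersolution inequality for $w$ (tested against $p\phi$) gives
\[
  \int_\Omega w^2\langle\nabla(p/w),\nabla\phi\rangle\,dv_g\le\lambda\int_\Omega pw\phi\,dv_g-\lambda\int_\Omega wp\phi\,dv_g=0.
\]
Thus $\widetilde\sigma:=p/w=(v/w-k)_+$ is a nonnegative weak subsolution of the weighted Laplacian $L_w f:=w^{-2}\operatorname{div}(w^2\nabla f)$ on $\Omega$; moreover $\widetilde\sigma$ vanishes on $\{r\le1\}$, so its support is contained in $\{r\ge1\}$.

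Next I would run the weighted Caccioppoli estimate. For $R>1$ take the radial Lipschitz cutoff $\eta=\eta(r)$ equal to $1$ on $\{r\le R\}$, to $0$ on $\{r\ge\alpha R\}$, and linear in $r$ in between, so $|\nabla\eta|\le((\alpha-1)R)^{-1}$ on $E(R,\alpha R)$ and $\nabla\eta\equiv0$ off that annulus; then $\phi:=\eta^2\widetilde\sigma$ is a nonnegative $H^1$-test function with support in the compact set $\{1\le r\le\alpha R\}\subset\Omega$. Testing $L_w\widetilde\sigma\le0$ against $\phi$ and absorbing the cross term by Cauchy--Schwarz yields
\[
  \int_{\{r\le R\}}w^2|\nabla\widetilde\sigma|^2\,dv_g
  \le 4\int_{E(R,\alpha R)}|\nabla\eta|^2\,w^2\widetilde\sigma^2\,dv_g
  \le\frac{4}{(\alpha-1)^2R^2}\int_{E(R,\alpha R)}w^2\widetilde\sigma^2\,dv_g.
\]
Since $w^2\widetilde\sigma^2=\bigl((v-kw)_+\bigr)^2\le v^2$ pointwise (on $\{v>kw\ge0\}$ one has $(v-kw)^2-v^2=kw(kw-2v)\le0$ because $2v>kw$ there), the last integral is $\le\int_{E(R,\alpha R)}v^2\,dv_g$. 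Letting $R=R_j\to\infty$ along a sequence realizing $\liminf_{R\to\infty}R^{-2}\int_{E(R,\alpha R)}v^2\,dv_g=0$, the right-hand side tends to $0$ while the left-hand side increases to $\int_\Omega w^2|\nabla\widetilde\sigma|^2\,dv_g$; hence $\nabla\widetilde\sigma\equiv0$. Being continuous and locally constant, with value $0$ on the nonempty set $\{r\le1\}$ in each connected component of $\Omega$, $\widetilde\sigma$ vanishes identically, i.e., $v\le k\,w$ on $\Omega$, in particular on $\{r\ge1\}$.

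I expect the main obstacle to be organizational rather than computational: seeing that one should work with the ratio $v/w$ and perform the ground state substitution---so that the zeroth-order term cancels and one is left with the divergence-form operator $L_w$---and then arranging the cutoffs so that the Caccioppoli estimate forces $\nabla\widetilde\sigma\equiv0$ rather than merely finite weighted energy; this is exactly where the choice of $k$ as a supremum over $\{r\le1\}$, and hence the constant $1$ in the conclusion, enters. The remaining points---validity of the Leibniz and testing manipulations for these $H^1_{\rm loc}$ (indeed continuous, by interior elliptic regularity) sub- and supersolutions, the standard fact that the positive part of a subsolution is a subsolution, and elementary integrability/compactness checks near $\partial\Omega$---are routine.
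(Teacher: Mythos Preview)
Your proposal is correct and follows essentially the same Agmon-type argument the paper uses (the paper defers the proof of Lemma~2.1 to Lemma~5.1, whose proof is exactly the ground state substitution $u_0:=(v-Cw)_+$, the weighted Caccioppoli inequality $\int_\Omega w^2|\nabla(\zeta u_0/w)|^2\,dv_g\le\int_\Omega|\nabla\zeta|^2u_0^2\,dv_g$, a linear radial cutoff $\zeta_n$, and the growth hypothesis to force $u_0/w$ constant, hence zero). The only cosmetic differences are that the paper derives the key inequality by combining the two separate estimates $(12)$ and $(13)$ rather than via your Leibniz identity, and that in Lemma~5.1 the constant $C$ is assumed given on a complementary region $\Omega\setminus\Omega_0$, whereas you construct it as $k=\sup_{\{r\le1\}}v/w$; both lead to the same conclusion.
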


For the proof of Lemma 2.1, see \cite{A} (or Lemma 5.1 below). 
\vspace{2mm}

The proof of Proposition 2.1 is simple. 
We shall prove only Proposition 2.1 (iii). 
Other assertions will be obtained in the same manner. 
Let $E$ be an end as is stated in Proposition 2.1 and assume that $\Delta_g r \ge c - \frac{\beta_3}{r (\log r)^{1+\varepsilon}}$ for $r \gg 1$, where $\beta_3$ and $\varepsilon$ are positive constants. 
We shall set $\delta_1 := \frac{\alpha \beta_3}{\varepsilon (2\alpha -c)}$. 
Then, a simple computation shows that 
\begin{align*}
& \exp \left\{ \alpha r + \delta_1 (\log r)^{-\varepsilon} \right\} 
  (- \Delta_g - \lambda ) \exp \{ - \alpha r - \delta_1(\log r)^{-\varepsilon}\}  \\
=&
  - \alpha ^2 
  + \big( \alpha - \varepsilon \delta_1 r^{-1}(\log r)^{-1-\varepsilon} \big) 
  \Delta_g r 
  - \lambda + 2\varepsilon \alpha \delta_1 r^{-1}(\log r)^{-1-\varepsilon} \\
& + \varepsilon \delta_1 r^{-2}(\log r)^{-1-\varepsilon} 
  + O(r^{-2}(\log r)^{-2-\varepsilon}) \\
\ge 
& - \alpha ^2 + c \alpha - \lambda 
  + \big( (2\alpha -c)\varepsilon \delta_1 - \alpha \beta_3 \big) 
  r^{-1}(\log r)^{-1-\varepsilon} \\ 
& + \varepsilon \delta_1 r^{-2}(\log r)^{-1-\varepsilon} 
  + O(r^{-2}(\log r)^{-2-\varepsilon}) \\
=
& \varepsilon \delta_1 r^{-2}(\log r)^{-1-\varepsilon} 
  + O(r^{-2}(\log r)^{-2-\varepsilon}) ,
\end{align*}
where we have used the equation $\alpha ^2 - c \alpha + \lambda = 0$. 
Thus, $\exp \{ - \alpha r - \delta_1 (\log r)^{-\varepsilon}\}$ is a supersolution of $(4)$ on some neighborhood of the infinity of $E$. 
Since $|u|$ is a subsolution of (4), Lemma 2.1 implies that 
\begin{align*}
  |u(x)| \le & C \exp \{ - \alpha r(x) - \delta_1 (\log r(x))^{-\varepsilon} \} \\ 
  \le & C \exp \{ - \alpha r(x) \}
\end{align*}
on $\{ x \in E \mid r(x) \ge 2 \}$, where $C>0$ is a constant. 
Thus, we have proved Proposition 2.1 (iii).

\section{Explicit construction of a ground state}

In this section, we shall prove Proposition $1.2$ and $1.3$. 
\vspace{2mm}


\noindent{\it Proof of Proposition 1.2}. Let $\lambda \in \big(0, \frac{(n-1)^2\kappa}{4} \big)$ be a constant. 
Then, there exists a unique constant $R_0 = R_0(\lambda, n)>0$ such that the first eigenvalue of the Dirichlet Laplacian $-\Delta^{\rm D}$ on the open ball $B_{\mathbb{R}^n}({\bf 0},R_0)$ with its radius $R_0$ in Euclidean $n$-space $(\mathbb{R}^n, g_0)$ coincides with $\lambda$. 
Let $\varphi_1$ be a positive-valued Dirichlet first eigenfunction of $B_{\mathbb{R}^n}({\bf 0},R_0)$. 
Then, since $(\mathbb{R}^n, g_0)$ is rotational symmetric, $\varphi_1$ is a radial function, that is, 
\begin{align*}
  \varphi_1 (x) = v_1 (|x|) \quad {\rm for}\ |x| \le R_0,
\end{align*}
where $|x|:={\rm dist}_{g_0}({\bf 0}, x)$; moreover, we have 
\begin{align*}
  v_1 > 0 \quad {\rm on}\ [0, R_0); \quad v_1'< 0 \quad {\rm on}\ (0, R_0].
\end{align*}
Since $v_1' < 0$ on $(0, \frac{R_0}{2}]$ and $\frac{d}{dt}\exp\{ - a_0 t \}<0$ on $[R_0,\infty)$, we can and do take a positive-valued function $v_0 \in C^{\infty}[0,\infty)$ so that 
\begin{align*}
& v_0(t) = 
\begin{cases}
  v_1(t)  & {\rm for}\ t \in [0, \frac{R_0}{2}] \\
  \exp\{ -a_0 t \} & {\rm for}\ t \in [R_0, \infty); 
\end{cases}\\
& v_0' < 0 \quad {\rm for} \ t \in (0,\infty). 
\end{align*}
Using this function $v_0$, we shall define 
\begin{align*}
  f_{\lambda}(r) := 
  \frac{R_0}{2} \exp \left\{ - \frac{1}{n-1} \int_{\frac{R_0}{2}}^r \frac{v_0''(t) + \lambda v_0(t)}{v_0'(t)} \, dt \right\} \quad {\rm for}\ r > 0.
\end{align*}
Then, since $v_1''(t) + \frac{n-1}{r} v_1'(t) = - \lambda v_1(t)$ for $t \in (0,\frac{R_0}{2})$, a direct computation shows that 
\begin{align*}
  f_{\lambda} (r) = r \quad {\rm for}~~ 0 < r \le \frac{R_0}{2} . 
\end{align*}
In particular, $(\mathbb{R}^n, g_{f_{\lambda}} := dr^2 + f_{\lambda}(r)^2 g_{S^{n-1}(1)})$ is smooth around the origin ${\bf 0}\in \mathbb{R}^n$. 
Moreover, from the definitions of $v_0$ and $f_{\lambda}$, we see that 
\begin{align}
  f_{\lambda} (r) = \frac{R_0}{2} \cdot C(R_0) \cdot \exp \{ \sqrt{\kappa}(r-R_0) \}\quad {\rm for}~~r \ge R_0,
\end{align}
where $C(R_0)$ is the constant defined by
\begin{align*}
  C(R_0) := \exp \left\{ - \frac{1}{n-1} 
  \int_{\frac{R_0}{2}}^{R_0} \frac{v_0''(t) + \lambda v_0(t)}{v_0'(t)} \,dt
  \right\}. 
\end{align*}
In particular, $\Delta_{g_{f_{\lambda}}} r \equiv (n-1) \sqrt{\kappa}$ for $r \ge R_0$, and hence, $\sigma_{\rm ess}(-\Delta_{g_{f_{\lambda}}}) = [\frac{(n-1)^2\kappa}{4}, \infty)$. 
Moreover, from the definition of $f_{\lambda}$, we have
\begin{align*}
 - v_0''(r) - (n-1) \frac{f_{\lambda}'(r)}{f_{\lambda}(r)} v_0'(r) 
 = \lambda v_0 (r) \quad {\rm on}~~\mathbb{R}^n \backslash \{{\mathbf 0}\}. 
\end{align*}
Here, since $v_0 = v_1$ on $[0, \frac{R_0}{2}]$, we have $- \Delta_{g_{f_{\lambda}}} (v_0(r)) = - \Delta (v_1 (r)) = \lambda v_1(r)$ on $B_{\mathbb{R}^n}({\bf 0}, \frac{R_0}{2})$; thus, including the neighborhood of the origin ${\bf 0}$, the following equation holds:
\begin{align*}
  - \Delta_{g_{f_{\lambda}}} (v_0(r)) = \lambda \,v_0 (r)
  \quad {\rm on}~~(\mathbb{R}^n, g_{f_{\lambda}}).
\end{align*}
From (5) together with $a_0 = \frac{(n-1)\sqrt{\kappa}}{2} + \sqrt{\frac{(n-1)^2\kappa}{4}-\lambda_0}$, we see that $v_0 \in L^2(\mathbb{R}^n, v_{g_{\lambda}})$; thus, $\lambda \in \sigma(-\Delta_{g_{f_{\lambda}}})$. 
Furthermore, since $v_0>0$ on $[0,\infty)$, the mini-max principle implies that $\lambda=\min \sigma (-\Delta_{g_{f_{\lambda}}})$ and that $v_0 (r)$ is a ground state of $(\mathbb{R}^n, g_{f_{\lambda}})$. 
Thus, we have proved Proposition 1.2. 
\vspace{3mm}

\noindent{\it Proof of Proposition 1.3}. Let $\lambda$ and $c$ be any positive constants satisfying $0<\lambda < \frac{c^2}{4}$. 
We shall set $u_1(t) := \sin \big( \sqrt{\lambda} \,t \big)$ and $R_1 := \frac{\pi}{2 \sqrt{\lambda}}$. 
We shall take a small positive constant $\varepsilon$ so that $\varepsilon < R_1 - \varepsilon$; note that $u_1'>0$ on $[\varepsilon, R_1 - \varepsilon]$. 
First, we shall take positive constants $c_1$ so that
$$
  0 < c_1  < u_1(\varepsilon). 
$$
Then, we shall take a positive-valued $C^{\infty}$-function $\varphi$ on $\mathbb{R}$ satisfying 
$$
  \varphi' > 0 \quad {\rm on}\ \mathbb{R}
$$
and 
\begin{align*}
\varphi (t) = 
\begin{cases}
  c_1 \exp \Big\{ \Big(\frac{c}{2} + \sqrt{\frac{c^2}{4} - \lambda} \Big) \, t \Big\} \quad & {\rm for} \ t\in (-\infty, 0] \\
  u_1(t) \quad & {\rm for} \ t\in [\varepsilon, R_1 - \varepsilon] \\
  \exp \Big\{ \Big(\frac{c}{2} - \sqrt{\frac{c^2}{4} - \lambda } \Big) \, (t- R_1) \Big\} \quad & {\rm for} \ t\in [R_1, \infty) .
\end{cases}
\end{align*}
Here, note that $u_1(R_1-\varepsilon) < 1= \exp \Big\{ \Big(\frac{c}{2} - \sqrt{\frac{c^2}{4} - \lambda } \Big) \, (t- R_1) \Big\}\Big|_{t=R_1}$. 
Using this function $\varphi $, we shall set 
\begin{align*}
  f(t) := \exp \left( - \int_0^t \frac{\varphi''(s) + \lambda \varphi(s)}{(n-1) \varphi'(s)} \right) ds
\end{align*}
and define a Riemannian manifold $(\mathbb{R} \times N, g := dt^2 + f^2(t) g_N)$, where $t$ is the standard coordinates on $\mathbb{R}$; $(N, g_N)$ is any given compact Riemannian manifold. 
Let $r$ denote the distance function to the hypersurface $\{0\}\times N$, i.e., $r(x) ={\rm dist}_g(\{0\}\times N, x)$ for $x \in \mathbb{R} \times N$. 
Then, direct computations shows that this $(\mathbb{R} \times N, g
=dt^2 + f^2(t) g_N)$ satisfies the desired assertions (i) and (ii) with $R:=R_1-\varepsilon$. 
Thus, we have proved Proposition 1.3.

\section{Proof of Theorem 1.2 and 1.3}

Let $(M, g)$ be an $n$-dimensional noncompact connected complete Riemannian manifold, and $U$ be an (not necessarily relatively compact) open subset of $M$ with compact $C^{\infty}$ boundary $W := \partial U$. 
We shall set $E := M \backslash U$ and consider the outward normal exponential map $\exp^{\perp}_W: {\mathcal N}^+(W) \to E$, where ${\mathcal N}^+(W)$ stands for the outward normal bundle on $W$. 
Let $\nu$ denote the outward unit normal vector field along $W$. 
We shall set 
\begin{align*}
  \rho (\theta ) 
& := 
  \sup \left\{ 
  t > 0 \mid {\rm dist}_g 
  \left( U, \exp_{W}^{\perp} \left( t \cdot \nu (\theta ) \right) \right) = t 
  \right\} \quad {\rm for}\ \theta \in W;\\
  {\mathcal D}_W
& := \left\{ t\cdot \nu (\theta) \in \mathcal{N}^+(W) \mid 
  0 \le t < \rho (\theta), \, \theta \in W \right\}.
\end{align*}
Then, $\exp_W^{\perp}|_{{\mathcal D}_W} : {\mathcal D}_W \to \exp_W^{\perp} \left( {\mathcal D}_W \right)= M \backslash \big( U \cup {\mathcal Cut}(W) \big)$ is a diffeomorphism. 
Let $\sigma$ denote the induced Riemannian measure on $W$, and write the Riemannian measure $dv_g$ on $\exp_W^{\perp} \left( {\mathcal D}_W \right)= M \backslash \big( U \cup {\mathcal Cut}(W) \big)$ as follows:
\begin{align*}
  dv_g = \sqrt{g^{W}(r, \theta )}\,dr \, d\sigma 
  \qquad {\rm for}~~\theta \in W \ {\rm and} \ 0 \le r < \rho(\theta ). 
\end{align*}
The following fundamental lemma is easily obtained by integration-by-parts, and hence, we shall omit its proof: 
\begin{lem}
On the understanding stated above, if $w(r)$ is a function depends only on $r$, and if $\varphi \in C_0^{\infty}({\rm Int}(E))$ be a nonnegative valued function, then we have
\begin{align*}
 & \int_E \langle \nabla w , \nabla \varphi \rangle \,dv_g - \lambda \int_E w \varphi \,dv_g \\
=
 & \int_W (\partial_r w)(\rho(\theta))\, \varphi (\rho(\theta), \theta) \sqrt{g^W(\rho(\theta), \theta)} \,d\sigma (\theta) + \int_E \left( -\Delta_g w - \lambda w \right) \varphi \,dv_g .
\end{align*}
Here, ${\rm Int}(E):= E\backslash W$ stands for the interior of $E$. 
\end{lem}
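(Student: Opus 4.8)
The plan is to reduce everything to the open, full-$v_g$-measure subset $\exp_W^\perp(\mathcal{D}_W) = M \setminus (U \cup \mathcal{Cut}(W))$, where the Fermi coordinates $(r,\theta)$ introduced above are valid and $r$ is smooth, and then to integrate by parts in the radial variable. Since $\mathcal{Cut}(W)$ has $v_g$-measure zero and $w(r)$ is Lipschitz on $E$ (the distance function $r$ being $1$-Lipschitz), one has $\nabla w = w'(r)\,\nabla r$ almost everywhere on $E$; hence each of the three integrals over $E$ in the statement is unchanged when taken over $M \setminus (U \cup \mathcal{Cut}(W))$ instead. On that set $|\nabla r| \equiv 1$ and $\nabla r = \partial_r$, so $\langle \nabla w, \nabla \varphi \rangle = w'(r)\,\partial_r\varphi$, and with $dv_g = \sqrt{g^W(r,\theta)}\,dr\,d\sigma$ over $\mathcal{D}_W = \{\,0 \le r < \rho(\theta),\ \theta \in W\,\}$ the left-hand side of the asserted identity equals
\[
  \int_W \int_0^{\rho(\theta)} w'(r)\,(\partial_r\varphi)\,\sqrt{g^W}\ dr\,d\sigma \;-\; \lambda \int_W \int_0^{\rho(\theta)} w(r)\,\varphi\,\sqrt{g^W}\ dr\,d\sigma .
\]

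Next I would integrate by parts in $r$ in the first double integral. For fixed $\theta$, $\int_0^{\rho(\theta)} w'(r)(\partial_r\varphi)\sqrt{g^W}\,dr = \big[\,w'(r)\,\varphi\,\sqrt{g^W}\,\big]_{r=0}^{r=\rho(\theta)} - \int_0^{\rho(\theta)} \partial_r\!\big(w'(r)\sqrt{g^W}\big)\,\varphi\,dr$. The boundary contribution at $r=0$ vanishes because $\varphi$ has compact support in $\mathrm{Int}(E) = E \setminus W$; the contribution at $r = \rho(\theta)$, integrated against $d\sigma$, is precisely the boundary term $\int_W (\partial_r w)(\rho(\theta))\,\varphi(\rho(\theta),\theta)\,\sqrt{g^W(\rho(\theta),\theta)}\,d\sigma(\theta)$ of the statement. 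Here one uses that $\sqrt{g^W}$, being the Jacobian of the smooth map $\exp_W^\perp$, extends continuously up to $r = \rho(\theta)$, and reads $\varphi(\rho(\theta),\theta)$ as $0$ whenever $\rho(\theta) = \infty$ or lies beyond the compact support of $\varphi$.

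For the remaining interior term I would invoke the first-variation identity $\Delta_g r = \partial_r \log \sqrt{g^W}$, valid off $\mathcal{Cut}(W)$, together with $\Delta_g(w(r)) = w''(r) + w'(r)\,\Delta_g r$ for a function depending only on $r$. These give $\partial_r\!\big(w'(r)\sqrt{g^W}\big) = \big(w''(r) + w'(r)\Delta_g r\big)\sqrt{g^W} = (\Delta_g w)\,\sqrt{g^W}$, so that $-\int_W \int_0^{\rho(\theta)} \partial_r(w'\sqrt{g^W})\,\varphi\,dr\,d\sigma = \int_E (-\Delta_g w)\,\varphi\,dv_g$. Assembling the boundary term, this interior term, and the $-\lambda\int_E w\varphi\,dv_g$ term reproduces exactly the right-hand side of the lemma.

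The only step needing real care is the treatment of $\mathcal{Cut}(W)$, and this is precisely what the Fermi-coordinate discussion at the start of the section supplies: $\exp_W^\perp$ restricts to a diffeomorphism from $\mathcal{D}_W$ onto the full-$v_g$-measure set $M \setminus (U \cup \mathcal{Cut}(W))$, the distance function $r$ is $1$-Lipschitz across $\mathcal{Cut}(W)$ with $\nabla w = w'(r)\nabla r$ a.e., and the coordinate domain $\mathcal{D}_W = \{(r,\theta) : 0 \le r < \rho(\theta)\}$ is measurable, so that Fubini applies and the one-dimensional integration by parts above is legitimate. Granting these standard facts, the argument reduces to exactly the computation just sketched.
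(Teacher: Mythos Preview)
Your proposal is correct and follows precisely the route the paper indicates: the paper omits the proof entirely, saying only that the lemma ``is easily obtained by integration-by-parts.'' Your argument---passing to the full-measure set $\exp_W^\perp(\mathcal{D}_W)$ via the Fermi coordinates, writing $dv_g = \sqrt{g^W}\,dr\,d\sigma$, integrating by parts in $r$ on each ray $[0,\rho(\theta))$, and using $\Delta_g r = \partial_r\log\sqrt{g^W}$ to identify the interior term---is exactly the computation the paper has in mind, carried out in full.
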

\noindent{\it Proof of Theorem 1.3.} 
Assume that
\begin{align}
  \Delta_g r \le - \beta + \frac{c_2}{r ( \log r )^{1 + \delta} } 
  \qquad 
  {\rm for}\ r \ge r_0 \ 
  {\rm on} \ M \backslash (U \cup {\mathcal Cut} (W)), 
\end{align}
where $r_0 >2$ and $c_2>$ are constants. 
Let $\varepsilon$ is a constant satisfying $0<\varepsilon<\delta$. 
Then, direct computations show that
\begin{align}
& \exp \big( - \widetilde{\alpha} r + (\log r)^{-\varepsilon} \big) (- \Delta_g - \lambda )   \exp \big( \widetilde{\alpha} r - (\log r)^{-\varepsilon} \big) \\ 
=
& - \widetilde{\alpha}^2 - 2 \varepsilon \widetilde{\alpha} r^{-1}(\log r)^{ -1 -\varepsilon} 
  - \big( \widetilde{\alpha} + \varepsilon r^{-1} (\log r)^{ - 1 - \varepsilon} \big) \Delta_g r  \notag \\  
& - \lambda + O\big( r^{-2}(\log r)^{ -1 -\varepsilon} \big) \notag \\
\ge 
& - \widetilde{\alpha}^2 + \beta \widetilde{\alpha} - \lambda 
  + (\beta - 2\widetilde{\alpha}) \varepsilon r^{-1}(\log r)^{-1 -\varepsilon} 
  - c_2 \widetilde{\alpha} r^{-1}(\log r)^{-1-\delta} \notag \\
& + O\big( r^{-2}(\log r)^{-1-\varepsilon} \big) \notag \\
=
&  \sqrt{\beta^2 -4 \lambda}\,r^{-1}(\log r)^{-1-\varepsilon} 
  - c_2 \widetilde{\alpha} r^{-1}(\log r)^{-1-\delta} + O(r^{-2}) 
  > 0 \notag \\ 
& \hspace{20mm} {\rm for} \ r \ge r_1 \ {\rm on}\ 
  M \backslash (U \cup {\mathcal Cut} (\partial U)), \notag 
\end{align}
where we have used the equation $- \widetilde{\alpha}^2 + \beta \widetilde{\alpha} - \lambda = 0$ and inequality (6); $r_1 \gg r_0$ is a constant depending only on the constants stated above. 
We shall apply Lemma 4.1 with $w = \exp \big( \widetilde{\alpha} r - (\log r)^{-\varepsilon} \big)$. 
Then, since $\partial_r w \ge 0$ for $r \gg 1$, Lemma 4.1, combined with (7), implies that $\exp \big( \alpha r - (\log r)^{-\varepsilon} \big)$ is a supersolution of (4) in the sense of distribution on some neighborhood of the infinity of $M\backslash U$. 
Hence, by Lemma 2.1, we obtained the desired result in Theorem 1.3.
\vspace{3mm}

\noindent{\it Proof of Theorem 1.2.} We shall take a constant $\varepsilon$ so that $0<\varepsilon < \delta$, and consider a function $\exp \left( - \alpha_0 r + (\log r)^{-\varepsilon} \right)$ on $M\backslash U$. 
Then, $\exp \left( - \alpha_0 r + (\log r)^{-\varepsilon} \right) \in H^1_{{\rm loc}} \big( B(2,\infty) \big)$; here, for $t\ge 0$, we set $B(t, \infty):=\{ x \in M\backslash U \mid r(t) \ge t \}$ for simplicity. 
Then, simple calculations show that
\begin{align}
& \exp \left( \alpha_0 r - (\log r)^{-\varepsilon} \right) (- \Delta_g -\lambda) 
  \exp \left( - \alpha_0 r + (\log r)^{-\varepsilon} \right) \\
= 
& - \alpha_0^2 - \lambda - 2 \alpha_0 \varepsilon r^{-1} (\log r)^{-1-\varepsilon} 
  + \left( \alpha_0 + \varepsilon r^{-1} (\log r)^{-1-\varepsilon} \right) \Delta_g r 
  \notag \\
& - O \left( r^{-2} (\log r)^{-\varepsilon-1} \right) \notag \\
\le 
& - \alpha_0^2 - \lambda - 2 \alpha_0 \varepsilon r^{-1} (\log r)^{-1-\varepsilon} 
  \notag \\
& + \left( \alpha_0 + \varepsilon r^{-1} (\log r)^{-1-\varepsilon} \right) 
  \left( c + C_1 r^{-1}(\log r)^{-1-\delta} \right) 
  - O \left( r^{-2} (\log r)^{-\varepsilon-1} \right) \notag \\ 
=
& - \alpha_0^2 + c \alpha_0 - \lambda 
  + (c - 2\alpha_0) \varepsilon r^{-1} (\log r)^{-1-\varepsilon} 
  + \alpha_0 C_1 r^{-1}(\log r)^{-1-\delta} \notag \\ 
& - O \left( r^{-2} (\log r)^{-\varepsilon-1} \right) \notag \\ 
=
& - \sqrt{c^2 -4 \lambda} \varepsilon r^{-1} (\log r)^{-1-\varepsilon}
  + \alpha_0 C_1 r^{-1}(\log r)^{-1-\delta} 
  - O \left( r^{-2} (\log r)^{-\varepsilon-1} \right) \notag \\ 
& \hspace{50mm} 
  \quad {\rm on}~~B(2,\infty) \backslash {\mathcal Cut}(W) , \notag
\end{align}
where we have used the assumption (3), and equations, $- \alpha_0^2 + c \alpha_0 - \lambda = 0$ and $c - 2\alpha_0 = - \sqrt{c^2 -4 \lambda}$. 
Thus, we obtain 
\begin{align}
 (- \Delta_g -\lambda) 
  \exp \left( - \alpha_0 r + (\log r)^{-\varepsilon} \right) < 0 
  \quad {\rm on}\ B(r_1, \infty) \backslash {\mathcal Cut}(W), 
\end{align}
where $r_1 \gg \min \{r_0, 2\}$ is a large constant. 
Since $\exp \left( - \alpha_0 r + (\log r)^{-\varepsilon} \right)$ is a decreasing function of $r \gg 1$, Lemma 4.1, combined with (9), implies that $\exp \left( - \alpha_0 r + (\log r)^{-\varepsilon} \right)$ is a subsolution of (4) on some neighborhood of the infinity of $M\backslash U$. 

Next, we shall show that $\exp \left( - \alpha_0 r + (\log r)^{-\varepsilon} \right) \in L^2 \big(B(2,\infty)\big)$. 
Under the notations stated at the beginning of this section, $\Delta_g r = \frac{\partial _r \sqrt{g^W(r, \theta)}}{\sqrt{g^W(r, \theta)}}$ on $M \backslash \big( U \cup {\mathcal Cut}(\partial U) \big)$. 
Hence, the assumption $(3)$, together with the fact $\alpha_0 > \frac{c}{2}$, implies that $\exp \left( - \alpha_0 r + (\log r)^{-\varepsilon} \right) \in L^2 \big(B(2,\infty)\big)$. 

Hence, Lemma 2.1 implies that, there exists a constant $C_5>0$ such that
\begin{align*}
 u & \ge C_5 \exp \left\{ - \alpha_0 r + (\log r)^{-\varepsilon} \right\} \\
   & \ge C_5 \exp \left\{ - \alpha_0 r \right\} \qquad {\rm on}~~B(2,\infty). 
\end{align*}
Thus, we have proved Theorem 1.2. 

\section{Proof of Theorem 1.1}

We shall first prove Theorem 5.1 below, from which Theorem 1.1 follows. 
\vspace{2mm}

Let $(M, g)$ be an $n$-dimensional noncompact complete Riemannian manifold. 
Let $p \in M$ be a fixed point and write $r(x):={\rm dist}_g(p, x)$ for $x \in M$. 
Let $E_0$ be one of unbounded components of $M\backslash B_p(R)$, where $R>0$ is a constant and $B_p(R):=\{ x \in M \mid r(x) < R \}$. 
Since $E_0$ is unbounded, there exists at least one normal geodesic ray $\gamma: [0, \infty) \ni t \mapsto \exp_p (tv_0) \in M$ satisfying $r(\gamma(t)) = t$ for $t \in [0,\infty)$ and $\gamma([R, \infty)) \subset E_0$. 
Here, $v_0 \in U_pM := \{ v \in T_pM \mid |v|=1 \}$. 
For $0<\theta<\pi$, let ${\mathcal R}(v_0, \theta)$ denote an open cone in $T_pM$ defined by ${\mathcal R}(v_0, \theta) := \{ v \in T_pM \mid \measuredangle (v_0, v) < \theta \}$. 
Assume that
\begin{enumerate}[(a)]
\item there exists a constant $\theta_0 >0$ such that 
$$\exp_p|_{{\mathcal R}(v_0, \theta)} : {\mathcal R}(v_0, \theta) \to \exp_p ({\mathcal R}(v_0, \theta)) =: {\mathcal A}(\theta_0)$$ is a diffeomorphism; 
\item there exists a constant $\kappa_0 >0$ such that $K_g(\partial_r \wedge v) \le - \kappa_0$ for any $x \in {\mathcal A}(\theta_0)$ and $v\in T_xM$ with $\partial_r \perp v$. Here, $K_g$ stands for the sectional curvature of $(M ,g)$; 
\item ${\rm Ric}_g \ge - (n-1) b$ on ${\mathcal A}(\theta_0)$, here $b \gg \kappa_0$ is a constant; 
\item there exists a constant $\varepsilon_0 >0$ such that ${\rm inj}(x) > \varepsilon_0$ for any $x \in {\mathcal A}(\theta_0)$, where ${\rm inj}(x)$ stands for the injectivity radius at $x$. 
\end{enumerate}


\begin{thm}
Let $(M, g)$ be an $n$-dimensional noncompact Riemannian manifold satisfying {\rm (a)}, {\rm (b)}, {\rm (c)}, and {\rm (d)}, stated above. 
Let $c>0$ and $0 \le \lambda < \frac{c^2}{4}$ are constants, and assume that 
$u$ is a solution of 
\begin{align*}
  -\Delta_g u - \lambda u = 0 
  \quad {\rm on}\ \ \overline{{\mathcal A}(\theta_0)}
\end{align*}
satisfying $\int_{{\mathcal A}(\theta_0)} u^2 dv_g < \infty$. 
For simplicity, we shall set $\alpha := \frac{c}{2} + \sqrt{\frac{c^2}{4}-\lambda}$. 
Assume further that, 
\vspace{0.5mm} 

\noindent {\rm (e)} there exist constants $\delta_1$, satisfying $\frac{c}{2} - \sqrt{\frac{c^2}{4}-\lambda} < \delta_1 < \alpha $, and $C_0 > 0$ such that 
\begin{align}
  |u(x)| \le C_0 \exp (- \delta_1 r (x)) 
  \quad {\rm for}~~x \in {\mathcal A}(\theta_0). 
\end{align}
Under these assumptions, if we replace $E$ with $\overline{{\mathcal A}(\theta_0)}$ in Proposition $1.1$, then we obtain the same upper bounds of $|u|$ on $\exp_p ({\mathcal R}(v_0, \theta_1))$ for any $\theta_1 \in (0, \theta_0)$ as those stated in {\rm (i)} and {\rm (iii)--(xi)} of Proposition $2.1$ above. 
\end{thm}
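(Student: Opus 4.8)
The plan is to run the same barrier argument that proves Proposition 2.1, but on the angular region $\mathcal{A}(\theta_0)$ rather than on a genuine end $E$, and the key point is that the hypotheses (a)--(e) are exactly what is needed to make each ingredient of that argument go through. First I would fix $\theta_1 \in (0,\theta_0)$ and choose an intermediate $\theta_1 < \theta_2 < \theta_0$. On $\mathcal{A}(\theta_2)$ the geodesic sphere coordinates centered at $p$ are valid by (a), so one has a smooth radial function $r$ and the Laplacian of a radial function $\phi(r)$ is $\phi''(r) + (\Delta_g r)\,\phi'(r)$. Hypothesis (b) (pinched negative curvature on the cone) together with the Hessian comparison theorem gives $\Delta_g r \ge (n-1)\sqrt{\kappa_0}\,\coth(\sqrt{\kappa_0}\,r)$ on $\mathcal{A}(\theta_0)$, hence in particular $\liminf_{r\to\infty}\Delta_g r \ge (n-1)\sqrt{\kappa_0}$ and, more quantitatively, $\Delta_g r \ge (n-1)\sqrt{\kappa_0} - C e^{-2\sqrt{\kappa_0}\,r}$; this exponentially small error is far smaller than any of the $1/(r(\log r)^{1+\varepsilon})$-type corrections appearing in the growth-rate conditions of Proposition 2.1 (i), (iii)--(xi), so whichever of those conditions is assumed on $\mathcal{A}(\theta_0)$ (with the constant $c$ there, which may differ from $(n-1)\sqrt{\kappa_0}$) the corresponding barrier $W = \psi(r)$ constructed in Section 2 satisfies $(-\Delta_g - \lambda)W \ge 0$ on $\{r \ge r_1\}\cap \mathcal{A}(\theta_2)$ exactly as in the proof there. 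The algebraic identity $\alpha^2 - c\alpha + \lambda = 0$ and the computations are verbatim those of Proposition 2.1.

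The one genuinely new issue, compared to the end case, is that $\mathcal{A}(\theta_2)$ has a lateral boundary $\partial\mathcal{A}(\theta_2) \setminus B_p(R)$ along which we have no control, so we cannot apply Agmon's comparison Lemma 2.1 directly on $\mathcal{A}(\theta_2)$. This is where hypothesis (e) — the a priori bound $|u(x)| \le C_0 \exp(-\delta_1 r(x))$ with $\frac{c}{2} - \sqrt{\frac{c^2}{4}-\lambda} < \delta_1 < \alpha$ — enters, and I expect \emph{this} to be the main obstacle to organize cleanly. The idea is to use a logarithmic (angular) cutoff that localizes near the axis: for $x \in \mathcal{A}(\theta_0)$ let $d(x)$ denote the distance, in the unit sphere $U_pM$, from $v_0$ to the direction of $x$, or equivalently work with $\angle(v_0, \exp_p^{-1}x)$; build a cutoff $\chi$ in this angular variable equal to $1$ on $\mathcal{A}(\theta_1)$ and supported in $\mathcal{A}(\theta_2)$, and combine it with the radial barrier to form a comparison function $\bar w = \chi\cdot W$ on $\mathcal{A}(\theta_2)$. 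Because of (b) the region $\mathcal{A}(\theta_2)$ grows exponentially and, by the co-area formula in these coordinates, the ``angular annulus'' $\mathcal{A}(\theta_2)\setminus\mathcal{A}(\theta_1)$ at radius $r$ has volume comparable to that of $\mathcal{A}(\theta_2)$; but the gradient terms produced by differentiating the angular cutoff are supported there, and — crucially — on $\mathcal{A}(\theta_2)\setminus\mathcal{A}(\theta_1)$ the solution $u$ is already known, via (e), to decay like $e^{-\delta_1 r}$, which is \emph{faster} than the barrier rate $e^{-\beta r}$ whenever $\beta < \delta_1$; for the borderline rate $\beta = \alpha > \delta_1$ one instead first upgrades (e) to the sharp rate by a preliminary application of part (i) of Proposition 2.1 (whose hypothesis $\liminf \Delta_g r \ge c$ holds by the comparison estimate), and only then runs the angular-cutoff argument.

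Concretely, I would proceed in the following steps. (1) Record the Hessian/Laplacian comparison consequences of (b): $\Delta_g r \ge (n-1)\sqrt{\kappa_0}\coth(\sqrt{\kappa_0}r)$ on $\mathcal{A}(\theta_0)$, and note the volume of $\mathcal{A}(\theta)$ at radius $r$ is $\asymp e^{(n-1)\sqrt{\kappa_0}\,r}$. (2) Verify that the assumed measure-growth-rate condition from Proposition 2.1 persists on $\mathcal{A}(\theta_0)$ with error absorbed, and hence the radial barrier $W=\psi(r)$ of Section 2 is a supersolution of $(4)$ on $\{r\ge r_1\}\cap\mathcal{A}(\theta_0)$. (3) Using (a), (c), (d) — bounded geometry: two-sided curvature bound from (b),(c) and injectivity-radius lower bound (d) — apply interior elliptic ($L^2 \to C^0$, or Moser/De Giorgi--Nash) estimates to $u$ to pass from the $L^2$ hypothesis to a local sup bound, uniformly on unit balls inside $\mathcal{A}(\theta_0)$; this, combined with Agmon's Theorem ALW-type integral decay along the end $E_0$ (which applies because $\min\sigma_{\mathrm{ess}}(-\Delta_g)>0$ and $\lambda$ is below it — here one uses condition 2 of Theorem 1.1 / the global hypothesis), gives a first crude pointwise bound that we can bootstrap; alternatively, condition (e) is simply assumed, as in the statement. (4) Form $\bar w = \chi_{\theta_1,\theta_2}(x)\,W(r(x))$ and test the weak supersolution inequality against $(\bar w - Nu_+)_- $-type functions à la Agmon (as in Lemma 2.1 / Lemma 5.1), observing that the cutoff error terms live on $\mathcal{A}(\theta_2)\setminus\mathcal{A}(\theta_1)$ where $|u|\le C_0 e^{-\delta_1 r}$ decays fast enough — faster than the volume growth $e^{(n-1)\sqrt{\kappa_0}r}$ times the barrier, after choosing $\delta_1$ (or its upgraded sharp value $\alpha$) appropriately — so these terms tend to $0$ and do not spoil the comparison. (5) Conclude $|u(x)| \le C\,W(r(x)) = C\,\psi(r(x))$ on $\mathcal{A}(\theta_1)$, which is precisely the bound asserted in the relevant item of Proposition 2.1 with $E$ replaced by $\overline{\mathcal{A}(\theta_0)}$ and the conclusion drawn on $\exp_p(\mathcal{R}(v_0,\theta_1))$. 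The hard part, as indicated, is step (4): making the angular-cutoff error terms quantitatively negligible against the exponential volume growth of the cone, which is exactly why (e) is imposed and why curvature must be bounded \emph{above} by $-\kappa_0 < 0$ rather than merely $\le 0$.
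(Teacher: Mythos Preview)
Your overall strategy---localize with an angular cutoff and use (e) on the lateral region---is the right one, but the comparison function $\bar w=\chi\,W$ does not work, and the way you invoke (e) is not how it actually enters. The paper's barrier is an \emph{interpolation}
\[
  w \;=\; \widetilde\varphi\,e^{-a_1 r}\;+\;(1-\widetilde\varphi)\,e^{-\delta_1 r},
\]
where $\widetilde\varphi$ is an Anderson--Schoen mollification of the angular cutoff (this is what conditions (c) and (d) are for). On the outer shell $w=e^{-\delta_1 r}$, which dominates $|u|$ there by (e), so the extended Agmon Lemma~5.1 applies with $\Omega_0$ equal to the inner cone; your product $\chi W$ vanishes on that shell and so cannot dominate $u$ there, and near $\partial(\mathrm{supp}\,\chi)$ it is not a supersolution at all. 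The quantitative heart of the argument is the Anderson--Schoen estimate $|\nabla\widetilde\varphi|,\,|\Delta_g\widetilde\varphi|\le C/\sinh(\sqrt{\kappa_0}\,r)$: after dividing $(-\Delta_g-\lambda)w$ by $e^{-\delta_1 r}$, the cutoff errors are of order $e^{-\sqrt{\kappa_0}\,r}$ while the positive main term coming from the inner piece is $(-a_1^2+ca_1-\lambda)\,e^{-(a_1-\delta_1)r}$. Hence $w$ is a supersolution for large $r$ \emph{only when} $a_1-\delta_1<\sqrt{\kappa_0}$.

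This constraint is exactly why one cannot jump straight to the target exponent. One gains at most $\sqrt{\kappa_0}$ in the decay rate per application, shrinking the cone angle slightly each time, and iterates finitely many times from $\delta_1$ up to any $\beta_1<\alpha$ (and then once more with the refined barrier for cases (iii)--(xi)). Your proposed shortcut---``first upgrade (e) to the sharp rate by a preliminary application of Proposition~2.1~(i)''---is circular: Proposition~2.1 is stated for a genuine end with no lateral boundary, and establishing its conclusion on the cone is precisely the content of Theorem~5.1. Likewise, the role of (e) is not that $|u|$ is small enough on the angular annulus to kill cutoff-error \emph{integrals}; it is that (e) furnishes the pointwise inequality $Cw\ge|u|$ on $\Omega\setminus\Omega_0$ required by Lemma~5.1, with $\delta_1$ serving as the base exponent for the bootstrap.
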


In order to prove Theorem 5.1, we shall first slightly extend Lemma $2.1$ as follows:
\begin{lem}
Let $\Omega$ and $\Omega_0$ be unbounded open subsets of a noncompact complete Riemannian manifold $(M, g)$ satisfying $\overline{\Omega_0} \subseteq \Omega$. 
Let $\lambda \ge 0$ be a constant and $u$ be a solution of $(4)$ on $\Omega$. 
Let $w \in C^0(\overline{\Omega})$ be a supersolution of $(4)$ on $\Omega$ satisfying $w > 0$ on $\Omega$. 
Assume that there exists a constants $C > 0$ such that
\begin{align*}
  C w(x) - u(x) > 0 \ \ {\rm for}\ \  
  x \in \Omega \backslash \Omega_0. 
\end{align*}
Assume further that there exists a constant $\alpha > 1$ such that 
\begin{align}
  \liminf_{n \to \infty} \frac{1}{n^2}\int_{\Omega (n, \alpha n)} u^2 \, dv_g = 0, 
\end{align}
where $\Omega (n, \alpha n) := \{ x \in \Omega \mid n \le {\rm dist}_g (p_0, x) \le \alpha n \}$ and $p_0$ is a fixed point of $M$. 
Then, we have 
\begin{align*}
  u(x) \le C w(x) \quad {\rm for}~~x \in \Omega. 
\end{align*}
\end{lem}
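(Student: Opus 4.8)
The plan is to reduce Lemma 5.2 to the already-available Lemma 2.1 by localizing the Agmon-type comparison argument inside the region $\Omega$ where we have no control on the boundary $\partial\Omega$, but where we do have the a priori comparison $Cw - u > 0$ on $\Omega \setminus \Omega_0$. The key observation is that Lemma 2.1 is proved by testing the (distributional) inequalities satisfied by $w$ (supersolution) and $u$ (subsolution, or rather a solution, so both $u$ and $-u$ are subsolutions) against a cutoff of the positive part $(u - Cw)_+$; the only place the compactness/connectedness of $\partial\Omega$ entered was to guarantee that this test function has compact support after truncation, which here is replaced by the hypothesis that $(u-Cw)_+$ already vanishes on $\Omega\setminus\Omega_0$ with $\overline{\Omega_0}\subseteq\Omega$.

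First I would set $\phi := (u - Cw)_+ \in H^1_{\mathrm{loc}}(\Omega)$ and note that, by the assumption $Cw - u > 0$ on $\Omega\setminus\Omega_0$ together with $\overline{\Omega_0}\subseteq\Omega$, the support of $\phi$ is a closed subset of $\Omega$ contained in $\overline{\Omega_0}$; in particular $\phi$ vanishes in a neighborhood of $\partial\Omega$. Next I would introduce the radial cutoff $\chi_n$ with $\chi_n \equiv 1$ on $\{\mathrm{dist}_g(p_0,\cdot) \le n\}$, $\chi_n \equiv 0$ outside $\{\mathrm{dist}_g(p_0,\cdot) \le \alpha n\}$, and $|\nabla \chi_n| \le C/n$ on the annulus $\Omega(n,\alpha n)$. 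Then $\chi_n^2 \phi$ is a legitimate nonnegative $H^1$ test function with compact support in $\Omega$ (it is supported in the intersection of $\overline{\Omega_0}$ with a compact ball, and vanishes near $\partial\Omega$). Plugging $\chi_n^2\phi$ into the weak formulations of $(-\Delta_g-\lambda)w \ge 0$ and $(-\Delta_g-\lambda)u = 0$ and subtracting gives, on the set $\{u > Cw\}$ where $\phi = u - Cw$ and $\nabla\phi = \nabla u - C\nabla w$,
\begin{align*}
  \int_{\Omega} \chi_n^2 |\nabla \phi|^2 \, dv_g
  \le \lambda \int_{\Omega} \chi_n^2 \phi^2 \, dv_g
  - 2 \int_{\Omega} \chi_n \phi \, \langle \nabla \chi_n, \nabla \phi \rangle \, dv_g,
\end{align*}
and absorbing the cross term via Cauchy-Schwarz yields the standard Agmon estimate. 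The point is that this is exactly the computation in the proof of Lemma 2.1, so I can either cite it verbatim with $\partial\Omega$ replaced by "the region where $\phi=0$", or reproduce the one display above and appeal to \cite{A}.

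To close the argument I would use hypothesis (11): from the inequality above one gets $\int_{\{\mathrm{dist}_g(p_0,\cdot)\le n\}} |\nabla\phi|^2 \le (\text{const})\, n^{-2}\int_{\Omega(n,\alpha n)} \phi^2 + \lambda\int \chi_n^2\phi^2$, and since $0 \le \phi \le |u| + Cw$ with $\phi$ supported where... — more precisely, one runs the usual iteration (choosing the cutoff and using $\liminf_n n^{-2}\int_{\Omega(n,\alpha n)} u^2 = 0$, which dominates $\int_{\Omega(n,\alpha n)}\phi^2$ up to the contribution of $w$, which is itself controlled because $\phi$ is supported in a fixed region away from infinity once we know... ) to conclude $\nabla\phi \equiv 0$, hence $\phi$ is locally constant, hence $\phi \equiv 0$ since it vanishes on the nonempty open set $\Omega\setminus\Omega_0$ and $\Omega$ is connected — giving $u \le Cw$ on all of $\Omega$. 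The main obstacle, and the one place requiring care, is verifying that $\phi$ genuinely has support away from $\partial\Omega$ so that $\chi_n^2\phi$ is an admissible test function (this is where $\overline{\Omega_0}\subseteq\Omega$ and strict positivity $Cw-u>0$, not merely $\ge 0$, are used), and making sure the $L^2$-smallness hypothesis (11) on $u$ suffices to control $\int_{\Omega(n,\alpha n)}\phi^2$; since $\phi \le (u-Cw)_+ \le u_+ \le |u|$ pointwise, we have $\int_{\Omega(n,\alpha n)}\phi^2 \le \int_{\Omega(n,\alpha n)} u^2$, so (11) is exactly what is needed and no hypothesis on $w$ at infinity is required.
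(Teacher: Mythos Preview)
Your approach has a genuine gap when $\lambda>0$. After subtracting the two weak formulations tested against $\chi_n^2\phi$ and absorbing the cross term, what you actually obtain is
\[
\tfrac12\int_\Omega \chi_n^2\,|\nabla\phi|^2\,dv_g
\;\le\;
\lambda\int_\Omega \chi_n^2\,\phi^2\,dv_g
\;+\; 2\int_\Omega |\nabla\chi_n|^2\,\phi^2\,dv_g.
\]
The last term does tend to $0$ along the subsequence supplied by (11), since $\phi\le|u|$. But the term $\lambda\int_\Omega\chi_n^2\phi^2$ increases to $\lambda\int_\Omega\phi^2$, and nothing in your hypotheses forces this to vanish (or even to be finite). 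The resulting inequality $\int_\Omega|\nabla\phi|^2\le 2\lambda\int_\Omega\phi^2$ is perfectly consistent with $\phi\not\equiv 0$, so you cannot conclude $\nabla\phi\equiv 0$. The phrase ``one runs the usual iteration'' does not resolve this; there is no iteration in the standard argument that kills a bulk $\lambda$-term.

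What the paper (following Agmon) does---and what is missing from your sketch---is a ground-state substitution. One tests the supersolution inequality for $w$ not against $\chi_n^2\phi$ but against the nonnegative function $(\chi_n\phi)^2/w$. The Picone-type identity then yields
\[
\int_\Omega w^2\Big|\nabla\Big(\frac{\chi_n\phi}{w}\Big)\Big|^2 dv_g
\;\le\;
\int_\Omega |\nabla(\chi_n\phi)|^2\,dv_g
\;-\;\lambda\int_\Omega (\chi_n\phi)^2\,dv_g,
\]
and the right-hand side is exactly the quadratic form that the subsolution inequality for $\phi$ bounds by $\int_\Omega|\nabla\chi_n|^2\phi^2$. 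The $\lambda$-term is thus absorbed, leaving $\int_\Omega w^2|\nabla(\chi_n\phi/w)|^2\le\int_\Omega|\nabla\chi_n|^2\phi^2\to 0$. This gives $\nabla(\phi/w)\equiv 0$, hence $\phi=C_0 w$ with $C_0=0$ because $\phi$ vanishes on $\Omega\setminus\Omega_0$ while $w>0$. Your localization idea (that $\phi$ is supported in $\overline{\Omega_0}\subset\Omega$ so the cut-off test functions are admissible) is correct and is indeed the only modification needed relative to Lemma~2.1; the computation you call ``exactly the computation in the proof of Lemma~2.1'' is not---the quotient trick is essential.
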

\begin{proof}
It is easy to see that Lemma 5.1 holds by modifying the arguments of Agmon \cite{A}. 
But, for the convenience of readers, we shall give its proof. 

We shall set $u_0(x) := \max \{ u(x) - C w(x), 0 \}$ for $x \in \Omega$. 
Then, $u_0$ is a subsolution of (4) on $\Omega$, and $u_0|_{\Omega \backslash \Omega_0} \equiv 0$. 
Therefore, by setting $u_0(x) =0$ for $x \in M \backslash \Omega$, $u_0$ can be extended to be a function defined on the whole $M$. 
Thus, for any real-valued $\zeta \in C^{\infty}_0(M)$, we have 
\begin{align*}
 \int_{\Omega} \big\{ \langle \nabla u_0, \nabla (\zeta ^2 u_0) \rangle - \lambda u_0 (\zeta^2u_0) \big\} dv_g \le 0, 
\end{align*}
and hence, 
\begin{align}
 \int_{\Omega} \big\{ |\nabla (\zeta u_0)|^2 - \lambda \zeta^2 u_0^2 \big\} dv_g \le \int_{\Omega} |\nabla \zeta|^2 u_0^2 \,dv_g . 
\end{align}
On the other hand, since $w$ is a positive supersolution of (4) on $\Omega$, for any real-valued $\zeta \in C^{\infty}_0(M)$, 
\begin{align*}
 \int_{\Omega} \Big\{ \Big\langle \nabla w, \nabla \frac{(\zeta u_0)^2}{w} \Big\rangle - \lambda w \frac{(\zeta u_0)^2}{w} \Big\} dv_g \ge 0, 
\end{align*}
and hence, 
\begin{align}
 \int_{\Omega} w^2 \Big| \nabla \Big(\frac{\zeta u_0}{w}\Big) \Big|^2 dv_g 
 \le 
 \int_{\Omega} |\nabla (\zeta u_0)|^2 dv_g 
 - \lambda \int_{\Omega} (\zeta u_0)^2 dv_g. 
\end{align}
From (12) and (13), we have for any real-valued $\zeta \in C^{\infty}_0(M)$, 
\begin{align}
 \int_{\Omega} w^2 \Big| \nabla \Big(\frac{\zeta u_0}{w}\Big) \Big|^2 dv_g 
 \le 
 \int_{\Omega} |\nabla \zeta|^2 u_0^2 \,dv_g. 
\end{align}
Now, let $\alpha >1$ be a constant, and define $\varphi : [0,\infty) \to [0,\infty)$ by
\begin{align*}
 \varphi (t) := 
 \begin{cases}
  \ \ \ \ 1 \quad & {\rm for}~~0 \le t \le 1, \vspace{1mm} \\ 
  \displaystyle - \frac{t-1}{\alpha - 1} + 1 \quad 
  & {\rm for}~~1 \le t \le \alpha, \vspace{1mm} \\
  \ \ \ \ 0 \quad & {\rm for}~~t \ge \alpha. 
 \end{cases}
\end{align*}
Setting $\zeta_n(x) := \varphi \Big( \frac{{\rm dist}_g(p_0, x)}{n} \Big)$ for $0 < n \in {\mathbb Z}$, substituting $\zeta = \zeta_n$ in (14) and letting $n \to \infty$, we obtain 
\begin{align}
& \int_{\Omega} w^2 \Big| \nabla \Big(\frac{u_0}{w}\Big) \Big|^2 dv_g 
  = \int_{\Omega} \lim_{n \to \infty} w^2 \Big| \nabla \Big(\frac{\zeta_n u_0}{w}\Big) \Big|^2 dv_g \\
\le 
& \liminf_{n \to \infty} \int_{\Omega} w^2 \Big| \nabla \Big(\frac{\zeta_n u_0}{w}\Big) \Big|^2 dv_g \le \liminf_{n \to \infty} \int_{\Omega} |\nabla \zeta_n|^2 u_0^2 \,dv_g \notag \\ 
= 
& \frac{1}{(\alpha -1)^2}\liminf_{n \to \infty} \frac{1}{n^2} \int_{\Omega (n, \alpha n)} u_0^2 \, dv_g = 0. \notag 
\end{align}
In the last equality, we have used our assumption (11). 
Since $w >0$ on $\Omega$, (15) implies that there exists a constant $C_0$ such that $u_0 = C_0 w$ on $\Omega$. 
Since $u_0=0$ on $\Omega \backslash \Omega_0$ and $w>0$ on $\Omega$, the constant $C_0$ must be zero. 
Thus, we have $u_0 \equiv 0$ on $\Omega$, which implies our conclusion, $u \le C w$ on $\Omega$. 
\end{proof}

Now, we shall prove Theorem 1.1 for the case corresponding Proposition 2.1 (i). 
Other cases can be shown in the same way. 

Let $\beta_1$ is a constant satisfying $\frac{c}{2}-\sqrt{\frac{c^2}{4}-\lambda} < \beta_1 < \alpha = \frac{c}{2}+\sqrt{\frac{c^2}{4}-\lambda}$. 
Since $\beta_1$ is less than $\alpha$, it suffices to prove that, for any $0<\theta_1 < \theta_0$, there exist a constant $C_{11}>0$ such that 
\begin{align}
 |u(x)| \le C_{11} \exp ( \beta_1 r (x) ) \quad 
 {\rm for}\ \ x \in \exp_p ({\mathcal R}(v_0, \theta_1)) 
 \ \ {\rm with}\ \ r(x) \gg 1
\end{align}
under the assumption that 
\begin{align}
  \Delta_g r (x) \ge c 
  \quad {\rm for}\ \ x \in \exp_p ({\mathcal R}(v_0, \theta_0)) \ \ {\rm with}\ \ r(x) \gg 1. 
\end{align}
Let $\theta_1 $ be any constant satisfying 
\begin{align}
  0 < \theta_1 < \theta_0 , 
\end{align}
and let $\eta : [0, \infty) \to [0, 1]$ be a non-increasing $C^{\infty}$-function satisfying 
\begin{align}
 \eta (t) := 
 \begin{cases}
  \ 1 \quad & {\rm for}~~0 \le t \le \theta_1, \vspace{1mm} \\
  \ 0 \quad & {\rm for}~~t \ge \displaystyle \frac{\theta_1 + \theta_0}{2}. 
 \end{cases}
\end{align}
Let $(r, \theta )$ be coordinates on $\exp_p({\mathcal R}(v_0, \theta_0))$ defined by $(0, \infty) \times \left( {\mathcal R}(v_0, \theta_0)\cap U_pM \right) \ni (r, \theta) \mapsto \exp_p(r \theta) \in \exp_p({\mathcal R}(v_0, \theta_0))$, where $U_pM := \{ v \in T_pM \mid |v|=1 \}$. 
Using this coordinates, we shall define a $C^{\infty}$-function $\varphi : \exp_p({\mathcal R}(v_0, \theta_0)) \to [0,1]$ by 
\begin{align}
  \varphi (r, \theta) := \eta \big( \measuredangle (v_0, \theta) \big) .
\end{align}
Now, let $\chi : [0, \infty) \to [0,1]$ be a $C^{\infty}$-function satisfying 
\begin{align*}
 \chi (t) := 
 \begin{cases}
  \ 1 \quad & {\rm for}~~0 \le t \le \displaystyle \frac{\varepsilon_0}{2}, \vspace{2mm} \\
  \ 0 \quad & {\rm for}~~t \ge \displaystyle \frac{3\varepsilon_0}{4}, 
 \end{cases}
\end{align*}
and set
\begin{align*}
 \widetilde{\varphi}(x) 
:= 
 \frac{\int_M \chi({\rm dist}_g(x, y)) \varphi(y) dv_g(y)}{\int_M \chi({\rm dist}_g(x,y)) dv_g(y)} \quad 
 {\rm for}~~x \in \exp_p({\mathcal R}(v_0, \theta_0))~~{\rm with}~~ r(x) \gg 1.
\end{align*}
Then, we have
\begin{align}
  |\Delta_g \widetilde{\varphi}| \le \frac{C_{12}}{\sinh (\sqrt{\kappa_0} r)} 
  \quad {\rm and} \quad 
  |\nabla \widetilde{\varphi}| \le \frac{C_{13}}{\sinh (\sqrt{\kappa_0} r)} ,
\end{align}
where $C_{12}$ and $C_{13}$ are constants depending only on $n= \dim M$, $b$, $\chi$, and the Lipchitz constant of $\eta$ (see Anderson-Schoen \cite{A-S}). 

Let $a_1$, which will be determined later, be a constant satisfying 
\begin{align}
  \delta_1< a_1 < \alpha ,
\end{align}
where $\delta_1$ is the constant stated in the assumption (e). 
We shall set
$$
  w := \widetilde{\varphi} \exp ( - a_1 r ) 
  + (1 - \widetilde{\varphi}) \exp ( - \delta_1 r) . 
$$
Then, direct computations, combined with (17) and (21), yield 
\begin{align*}
& - \Delta_g w - \lambda w \\
\ge 
& \Big\{ -\Delta_g \widetilde{\varphi} 
  + 2 a_1 \frac{\partial \widetilde{\varphi}}{\partial r} 
  + \widetilde{\varphi} ( - a_1^2 + c a_1 - \lambda ) \Big\} \exp ( - a_1 r )\\
& + \Big\{ \Delta_g \widetilde{\varphi} 
  - 2 \delta_1 \frac{\partial \widetilde{\varphi}}{\partial r} 
  + (-\delta_1^2 + c \delta_1 - \lambda)(1 - \widetilde{\varphi})  \Big\} 
  \exp (- \delta_1 r) \\
\ge 
& \exp (-\delta_1 r) \Big\{ - \frac{C_{12}+2\delta_1C_{13}}{\sinh(\sqrt{\kappa_0} r)}   + (-\delta_1^2 + c\delta_1 - \lambda)(1 - \widetilde{\varphi}) \\
& \hspace{25mm} + \frac{- a_1^2 + c a_1 - \lambda }{\exp \big\{ (a_1-\delta_1)r \big\}} 
  \widetilde{\varphi} - \frac{C_{12}+2a_1C_{13}}{\sinh(\sqrt{\kappa_0} r)\exp \big\{ (a_1-\delta_1)r \big\}} \Big\}
\end{align*}
for $r \gg 1$ on $\exp_p({\mathcal R}(\theta_0))$. 
Because $-\delta_1^2 + c\delta_1 - \lambda >0$ and $- a_1^2 + c a_1 - \lambda >0$, the right hand side of the inequality above is positive for $r \gg 1$ on $\exp_p({\mathcal R}(\theta_0))$, if $(0<)\,a_1-\delta_1< \sqrt{\kappa_0}$. 
On the other hand, from (19) and (20), $w$ coincides with $\exp (-\delta_1 r)$ on ${\mathcal R}(\theta_0) \backslash {\mathcal R}(\theta_1)$ for large $r$, and $\exp (-\delta_1 r)$ is a supersolution of (4) satisfying the assumption (e). 
Therefore, applying Lemma 4.1, we obtain
\begin{align}
  |u(x)| \le C_{14} \exp (- a_1 r(x)) \quad {\rm for}\ \ x \in \exp_p({\mathcal R}(v_0, \theta_1)) 
\end{align}
for some constant $C_{14}>0$. 
This argument holds good as long as $a_1 < \delta_1 + \sqrt{\kappa_0}$. 
Therefore, in view of (10) and (23), we can replace ``$\delta_1$ and $a_1$'' with ``$a_1$ and $a_2 := a_1 + \varepsilon$'' respectively, and obtain 
$$
  |u(x)| \le C_{15} \exp (- a_2 r(x)) \quad {\rm for}\ \ x \in \exp_p({\mathcal R}(v_0, \theta_2)) ,
$$
if $0< \varepsilon <\sqrt{\kappa_0}$. 
Here, $\theta_2$ is any fixed constant satisfying $0<\theta_2<\theta_1$, and $C_{15}>0$ is a constant. 
Thus, repeating this argument, we see that, for any $\theta_1 \in (0,\theta_0)$, our desired inequality (16) holds for some positive constant $C_{11}$. 
\vspace{2mm}

Next, we shall prove Theorem 1.1. 
In view of Theorem 5.1, it suffices to prove (e) under the assumptions of Theorem 1.1. 
In order to prove (e), we shall recall the following: 
\begin{lem}[\cite{D1}, Corollary 5.7]
Let $(M, g)$ be a complete Riemannian manifold with Ricci curvature bounded from below and $p_0$ be a fixed point of $M$. 
We write $r:={\rm dist}_g(p_0, *)$. 
Assume that $u$ is a $L^2$-solution of $-\Delta_g u - \lambda u =0$ on $M$, where $0<\lambda<\min \sigma_{\rm ess}(-\Delta_g)$ is a constant. 
Then, for any $0<\varepsilon<\sqrt{\min \sigma_{\rm ess}(-\Delta_g)-\lambda}$, there exists a positive constant $C_0(\varepsilon)$ such that 
\begin{align}
  |u(x)| 
\le 
  C_0(\varepsilon) \,{\rm Vol}(B_x(1))^{-\frac{1}{2}} 
  \exp \Big\{ \Big(\sqrt{\min \sigma_{\rm ess}(-\Delta_g)-\lambda}-\varepsilon\Big)r(x) \Big\}
\end{align}
\end{lem}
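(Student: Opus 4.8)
The plan is to obtain the bound by combining an Agmon-type weighted $L^2$ estimate near infinity with a uniform interior elliptic (mean value) inequality. This is the classical route to pointwise exponential decay of generalized eigenfunctions below the essential spectrum, and it is precisely the content of Corollary 5.7 in \cite{D1}; so I would either cite that corollary directly or reconstruct its proof along the following lines.

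\emph{Step 1 (Persson characterization).} Fix $\varepsilon$ as in the statement and pick $\varepsilon'>0$ small. By Persson's theorem, $\min\sigma_{\rm ess}(-\Delta_g)$ equals the supremum over compact sets $K\subset M$ of $\inf\{\int_M|\nabla\phi|^2\,dv_g \,/\,\int_M\phi^2\,dv_g : 0\ne\phi\in C_0^{\infty}(M\setminus K)\}$. Hence there is $R_0>0$ with $\int_M|\nabla\phi|^2\,dv_g\ge(\min\sigma_{\rm ess}(-\Delta_g)-\varepsilon')\int_M\phi^2\,dv_g$ for all $\phi\in C_0^{\infty}(\{r>R_0\})$, where $r:={\rm dist}_g(p_0,\ast)$.

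\emph{Step 2 (Agmon exponential $L^2$ bound).} Set $\rho:=\big(\sqrt{\min\sigma_{\rm ess}(-\Delta_g)-\lambda}-\varepsilon\big)\,r$. Since $|\nabla r|=1$ a.e., we have $|\nabla\rho|^2=\big(\sqrt{\min\sigma_{\rm ess}(-\Delta_g)-\lambda}-\varepsilon\big)^2$, which is strictly less than $\min\sigma_{\rm ess}(-\Delta_g)-\lambda$ with a fixed gap $\varepsilon\big(2\sqrt{\min\sigma_{\rm ess}(-\Delta_g)-\lambda}-\varepsilon\big)>0$ (here $0<\varepsilon<\sqrt{\min\sigma_{\rm ess}(-\Delta_g)-\lambda}$ is used). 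Testing $-\Delta_g u=\lambda u$ against $\zeta^2e^{2\rho}u$ with a cutoff $\zeta$ supported in $\{r>R_0\}$, using the standard Agmon energy identity to rewrite $\int|\nabla(\zeta e^{\rho}u)|^2$ in terms of $\int|\nabla\rho|^2\zeta^2e^{2\rho}u^2$ plus controlled remainders, and applying Step 1 to $\phi=\zeta e^{\rho}u$, the leading terms are absorbed by the gap; letting $\zeta\uparrow1$ (legitimate because the gap dominates the cutoff error while $u\in L^2$) yields $\int_{\{r>R_0\}}e^{2(\sqrt{\min\sigma_{\rm ess}(-\Delta_g)-\lambda}-\varepsilon)r}\,u^2\,dv_g<\infty$.

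\emph{Step 3 (uniform mean value inequality) and conclusion.} Since ${\rm Ric}_g$ is bounded below, Bishop--Gromov comparison gives volume doubling on balls of radius $\le1$ and hence, by Buser's inequality, a scale-invariant Neumann--Poincar\'e inequality on such balls; Moser iteration applied to $|u|$, which satisfies $-\Delta_g|u|\le\lambda|u|$ weakly, then produces a constant $C=C(n,b,\lambda)$ with $\sup_{B_x(1/2)}|u|^2\le\frac{C}{{\rm Vol}(B_x(1))}\int_{B_x(1)}u^2\,dv_g$ for every $x\in M$. For $y\in B_x(1)$ one has $\rho(y)\ge\rho(x)-\big(\sqrt{\min\sigma_{\rm ess}(-\Delta_g)-\lambda}-\varepsilon\big)$, so $\int_{B_x(1)}u^2\,dv_g\le e^{-2\rho(x)+C'}\int_M e^{2\rho}u^2\,dv_g$; combining with the mean value inequality and taking square roots gives the asserted bound. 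The only genuinely delicate step is Step 3: extracting a mean value inequality with the correct normalization ${\rm Vol}(B_x(1))^{-1/2}$, uniformly in $x$, using nothing beyond the lower Ricci bound — this is where the Saloff-Coste type machinery (doubling $+$ Poincar\'e $\Rightarrow$ elliptic/parabolic Harnack and $L^2$-to-$L^{\infty}$ bounds) is essential. Steps 1, 2 and the final combination are routine once this local estimate is available.
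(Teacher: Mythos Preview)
Your proposal is correct and follows essentially the same route the paper indicates: the paper does not prove Lemma~5.2 in detail but simply records that it ``follows from the upper bound for the heat kernel and the Agmon's procedure'' and refers to \cite{D1}, Section~5. Your Steps~1--2 are exactly the Agmon procedure, and your Step~3 supplies the $L^2$-to-$L^{\infty}$ bound via elliptic Moser iteration (volume doubling plus Poincar\'e from the Ricci lower bound) rather than via the Li--Yau heat kernel upper bound; these two mechanisms are equivalent under a lower Ricci bound, so the argument is the same in substance and in fact more fully spelled out than what the paper itself provides.
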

Lemma 5.2 follows from the upper bound for the heat kernel and the Agmon's proceedure \cite{A-book} (or Theorem ALW); for a proof, see [6, Section 5]. 

By the Bishop's comparison theorem combining (24) we have, for any $0 < \delta < \sqrt{\min \sigma_{\rm ess}(-\Delta_g) - \lambda}$ and $\theta_1 \in (0, \theta_0)$, 
$$
  |u(x)| \le c(\delta, \theta_1, n) \exp (-\delta r(x))
$$ 
holds on $\varphi \left(\exp_q^N \big( {\mathcal R}(v_1^N, \theta_1) \big)\right)$. 
In view of the assumption 3 in Theorem 1.1, we see that (e) holds. 
Thus, we have proved Theorem 1.1.

\section{Proof of Theorem 1.3}
First, we shall prove the following: 
\begin{prop}
Let $({\mathbb R}^n, g:=dr^2 + f(r)^2 g_{S^{n-1}(1)})$ be a rotationally symmetric Riemannian manifold. 
Assume that, there exist constants $\delta >0$, $r_1>0$, and $0 \neq c \in {\mathbb R}$ such that 
\begin{align}
  \Delta_g r = c - \frac{1+\delta}{2cr^2} \quad {\rm for}\ r \ge r_1.
\end{align}
Then, there exists a constant $r_0=r_0(\delta, c)$ such that,
\begin{align*}
  \lambda_1^D \Big( E(R, 2kR) \Big) < \frac{c^2}{4} 
  \quad {\rm if}\ R \ge \max\{ r_0, r_1 \} \ {\rm and}\  
  k > 2 \max \Big\{ 1, \exp \Big( \frac{24}{\delta} \Big) \Big\} .
\end{align*}
Here, $\lambda_1^D(\Omega)$ stands for the first Dirichlet eigenvalue of $-\Delta_g$ on a bounded domain $\Omega \subset {\mathbb R}^n${\rm ;} for $0<s<t\le \infty$, $E(s, t):= \{ x \in {\mathbb R}^n \mid s < {\rm dist}_g({\mathbf 0}, x) < t \}$ is the `annular' region centered at the origin ${\mathbf 0} \in {\mathbb R}^n$. 
In particular, $\lambda_0\big( E(R, \infty) \big) < \frac{c^2}{4}$ for any $R>0$, where $\lambda_0\big( E(R, \infty) \big):= \min \sigma (-\Delta_g^D|_{E(R, \infty)})$, and $\Delta_g^D|_{E(R, \infty)}$ stands for the Dirichlet Laplacian on $E(R, \infty)$. Note that $(25)$ implies $\sigma_{\rm ess}(-\Delta_g)=[\frac{c^2}{4}, \infty)$. 
\end{prop}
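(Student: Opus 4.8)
The plan is to bound $\lambda_1^D(E(R,2kR))$ from above by the Rayleigh quotient of a radial test function built from the critical exponential $e^{-cr/2}$. Since $({\mathbb R}^n,g)$ is rotationally symmetric one has $\Delta_g r=(n-1)f'/f=\partial_r\log f^{n-1}$, so integrating $(25)$ from $r_1$ yields $f(r)^{n-1}=A\,e^{cr+\frac{1+\delta}{2cr}}$ for $r\ge r_1$ with a constant $A=A(\delta,c,r_1,n)>0$. A one-line computation then gives, for $r\ge r_1$,
\[
  -\Delta_g\, e^{-cr/2}=\Big(\frac{c^2}{4}-\frac{1+\delta}{4r^2}\Big)e^{-cr/2},
\]
so $e^{-cr/2}$ solves the eigenvalue equation with the inverse-square potential $-\tfrac{1+\delta}{4r^2}$, whose coefficient lies just above the Hardy-critical value $\tfrac14$; this is exactly what forces spectrum below $\tfrac{c^2}{4}$.

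Second, I would fix $R\ge\max\{r_0,r_1\}$ (the constant $r_0=r_0(\delta,c)$ is chosen below), take a Lipschitz radial function $h$ supported in $(R,2kR)$, and set $\psi:=e^{-cr/2}h\in H^1_0(E(R,2kR))$. Writing $|\nabla\psi|^2=(\partial_r\psi)^2$, expanding, and integrating by parts once (equivalently, performing the ground-state substitution with the identity above), one obtains
\[
  \int_{E(R,2kR)}\!|\nabla\psi|^2\,dv_g-\frac{c^2}{4}\int_{E(R,2kR)}\!\psi^2\,dv_g
  =A\,{\rm Vol}(S^{n-1})\int_R^{2kR}\Big(h'(r)^2-\frac{1+\delta}{4r^2}h(r)^2\Big)w(r)\,dr,
\]
where $w(r):=e^{\frac{1+\delta}{2cr}}$ (so that $e^{-cr}f(r)^{n-1}=A\,w(r)$) and $w(r)\to 1$ as $r\to\infty$. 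Thus it suffices to produce one admissible $h$ making the $w$-weighted bracket integral negative; then $\lambda_1^D(E(R,2kR))<\tfrac{c^2}{4}$.

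Third, I would take the standard Hardy quasi-mode $h(r):=\sqrt{r}\,\sin\!\big(\pi\log(r/R)/\log(2k)\big)$, which vanishes at $r=R$ and $r=2kR$; the change of variable $t=\log r$ reduces the unweighted bracket to an elementary trigonometric integral and gives
\[
  \frac{\int_R^{2kR}h'(r)^2\,dr}{\int_R^{2kR}\frac{1}{4r^2}h(r)^2\,dr}=1+\frac{4\pi^2}{(\log 2k)^2}.
\]
Since $\sup_{[R,2kR]}w/\inf_{[R,2kR]}w\le\exp\!\big(\tfrac{1+\delta}{2|c|R}\big)$, bounding $w$ above in the gradient term and below in the potential term shows the weighted bracket is negative provided
\[
  1+\frac{4\pi^2}{(\log 2k)^2}<(1+\delta)\exp\!\Big(-\frac{1+\delta}{2|c|R}\Big).
\]
I would then fix $r_0=r_0(\delta,c)$ large enough that $(1+\delta)\exp(-\tfrac{1+\delta}{2|c|R})\ge 1+\tfrac{\delta}{3}$ for $R\ge r_0$, and require $k$ large enough that $\tfrac{4\pi^2}{(\log 2k)^2}<\tfrac{\delta}{3}$; being generous with the constants, $k>2\max\{1,\exp(24/\delta)\}$ suffices. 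This proves the first assertion. The ``in particular'' statement follows from domain monotonicity of the first Dirichlet eigenvalue: $E(R,\infty)$ contains an annulus of the above type for every $R>0$, so $\lambda_0(E(R,\infty))<\tfrac{c^2}{4}$; and $\sigma_{\rm ess}(-\Delta_g)=[\tfrac{c^2}{4},\infty)$ follows from the decomposition principle together with the classical reduction of the radial part of $-\Delta_g$ near infinity to $-d^2/dr^2+Q(r)$ with $Q=\tfrac14(\Delta_g r)^2+\tfrac12(\Delta_g r)'\to\tfrac{c^2}{4}$.

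The main obstacle is the third step. Because the inverse-square coefficient $\tfrac{1+\delta}{4}$ beats the critical value $\tfrac14$ by only $\tfrac{\delta}{4}$, the quasi-mode defeats the Hardy inequality by a margin of only $O(\delta/(\log 2k)^2)$, and this slim gain must survive the non-constant weight $w(r)=e^{(1+\delta)/(2cr)}$. Controlling this is precisely what forces the two thresholds — $r_0$ large so that $w$ is uniformly near a constant on $[R,2kR]$, and $k$ large (blowing up as $\delta\downarrow 0$) so that the Hardy defect is small relative to $\delta$ — and is why the estimates must be carried out quantitatively rather than asymptotically. Everything else (the reduction of $f^{n-1}$, the integration by parts, the trigonometric computation, domain monotonicity, and the $\sigma_{\rm ess}$ remark) is routine.
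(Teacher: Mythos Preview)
Your approach is correct and is essentially the paper's: both bound the Rayleigh quotient on $E(R,2kR)$ with a radial test function of the form $(\text{critical weight})\times r^{1/2}\times(\text{cutoff})$, reducing the question to a one-dimensional Hardy-type inequality with supercritical coupling $\tfrac{1+\delta}{4}$. The only differences are technical: the paper takes the \emph{full} density factor $(\sqrt{g^W})^{-1/2}=f^{-(n-1)/2}$ as the weight (rather than just $e^{-cr/2}$) together with a piecewise-linear cutoff $\chi$, which makes your residual factor $w\equiv 1$ and, after two integrations by parts, collapses the form to the exact expression $3-\tfrac{\delta}{8}\log(k/2)$, from which the threshold $k>2e^{24/\delta}$ is immediate; your sine quasi-mode with only the leading exponential $e^{-cr/2}$ also works and recovers the same threshold (the quadratic $24y^2-2\pi\sqrt{3}\,y+\log 4$ in $y=\delta^{-1/2}$ has negative discriminant, so your $\delta/3$ split is indeed valid for all $\delta>0$), at the cost of the extra estimate on $w$.
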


\noindent{\it Proof of Proposition 6.1.} We shall set $U:= B(r_1)$, $W:=\partial U$, and $\sqrt{g^W}(r) := f^{n-1}(r)$. 
Let $\chi$ be a function defined by
\begin{align*}
\chi(t) := 
\begin{cases} 
\ \ \frac{1}{R}(t - R)\quad & {\rm if}\quad t \in [R, 2R], \\ 
\ \ 1\quad & {\rm if}\quad t \in [2R, kR], \\ 
\ \ - \frac{1}{kR}(t - 2kR)\quad & {\rm if}\quad t \in [kR, 2kR], \\ 
\end{cases} 
\end{align*} 
where $k$ is a constant satisfying $k > 2 \max \left\{ 1, \exp \Big( \frac{24}{\delta} \Big) \right\}$; $R > \max \{ r_0, r_1 \}$ is a constant; $r_0=r_0(c, \delta)$ is a constant determined later. 
We shall set $h(r):= \left( \sqrt{g^W}(r) \right)^{-\frac{1}{2}} \cdot \chi(r) \cdot r^{\frac{1}{2}}$ and calculate 
$$
  \int_0^{\infty} \left\{ (\partial_r h)^2 - \frac{c^2}{4}  h^2 \right\} 
  \sqrt{g^W} dr
$$
as follows. 
Direct computations shows that
\begin{align*}
& \Big\{ (\partial_r h)^2 - \frac{c^2}{4}  h^2 \Big\}\sqrt{g^W} \\
= 
& \frac{1}{4} (\Delta_g r)^2\, r \,\chi^2(r) + r \left(\chi '(r)\right)^2 
  + \frac{1}{4} \chi^2(r) \,r^{-1} - (\Delta_g r) \,r \,\chi(r) \chi '(r) \\
& -\frac{1}{2} (\Delta_g r) \chi^2(r) 
  + \chi(r) \chi '(r) - \frac{c^2}{4} \chi^2(r)\, r . 
\end{align*}
Therefore, substitution of (26) into the equation above yields 
\begin{align}
& \Big\{ (\partial_r h)^2 - \frac{c^2}{4}  h^2 \Big\}\sqrt{g^W} \\
= 
& - \frac{1}{4r} \left\{ \delta - \frac{1+\delta}{cr} 
  - \frac{(1+\delta)^2}{4c^2r^2} \right\} \chi ^2(r)
  + r \left( \chi'(r) \right)^2 + \frac{1+\delta}{2cr} \chi(r) \chi'(r) 
  \notag \\
& - c r \chi(r) \chi'(r) - \frac{c}{2} \chi ^2(r) + \chi(r) \chi'(r) . \notag
\end{align}
Here, integration-by-parts yields 
\begin{align*}
& \int_0^{\infty} \left\{ - c r \chi(r) \chi'(r) - \frac{c}{2} \chi ^2(r) 
  + \chi(r) \chi'(r)  \right\} dr = 0~;~\\
& \int_0^{\infty} \frac{1+\delta}{2cr} \chi(r) \chi'(r) dr 
  = \int_0^{\infty} \frac{1+\delta}{4cr^2} \chi^2(r) dr, 
\end{align*}
and hence, the integration of (26) over $[0, \infty)$ turns out to be 
\begin{align*}
& \int_0^{\infty} 
  \Big\{ (\partial_r h)^2 - \frac{c^2}{4}  h^2 \Big\}\sqrt{g^W} \,dr \\
= 
& \int_0^{\infty} r (\chi '(r))^2 \,dr 
  - \int_0^{\infty} \frac{1}{4r} \left\{ \delta - \frac{2(1+\delta)}{cr} 
  - \frac{(1+\delta)^2}{4c^2r^2} \right\} \chi ^2(r) \, dr .
\end{align*}
As for the last term of the equation above, we have 
\begin{align*}
  - \frac{1}{4r} \left\{ \delta - \frac{2(1+\delta)}{cr} 
  - \frac{(1+\delta)^2}{4c^2r^2} \right\} \le - \frac{\delta}{8r} 
  \quad {\rm for\ any}\ r \ge r_0,
\end{align*}
where $r_0$ is a constant depending only on $\delta$ and $c$. 
Therefore, for any $R \ge \max\{r_0, r_1\}$ and $k > 2 \max \Big\{ 1, \exp \Big( \frac{24}{\delta} \Big) \Big\}$, we have
\begin{align*}
& \int_0^{\infty} 
  \Big\{ (\partial_r h)^2 - \frac{c^2}{4}  h^2 \Big\}\sqrt{g^W} \,dr \\
\le 
& \int_0^{\infty} r (\chi '(r))^2 \,dr 
  - \frac{\delta}{8} \int_{R}^{2kR} \frac{\chi^2(r)}{r} \,dr \\
<
& \frac{1}{R^2} \int_{R}^{2R} r \,dr 
  + \frac{1}{(kR)^2} \int_{kR}^{2kR} r \,dr 
  - \frac{\delta}{8} \int_{2R}^{kR} \frac{1}{r} \,dr \\
=
& 3 - \frac{\delta}{8} \log \Big( \frac{k}{2} \Big) < 0.
\end{align*}
Since ${\rm supp} ~h(r) = \overline{B(2kR)} \backslash B(R)$, the mini-max principle, together with this inequality, gives the desired assertion of Proposition 6.1. 
\vspace{3mm}

\begin{rem}
{\rm 
Let $(\mathbb{R}^n, g := dr^2 + f(r) g_{S^{n-1}(1)})$ be a rotationally symmetric manifold satisfying $f^{n-1}(r) = \exp \left(cr + \frac{1+\delta}{2cr} \right)$ for $r\ge r_0$. 
Then, $\sqrt{{\rm Vol}(E(R,R+1))} \sim C_0(c, \delta) \exp (\frac{c}{2}R)$ as $R \to \infty$. 
Moreover, by Proposition 6.1, $\sqrt{\lambda_0(E(R, \infty))-\lambda}< \sqrt{\frac{c^2}{4}  - \lambda} = \sqrt{\min \sigma_{\rm ess}(-\Delta_g)  - \lambda}$ for all $R>0$. 
Hence, by comparing Theorem ALW and Proposition 1.1 (iii), Theorem ALW seems to be not so sharp. 
Indeed, Proposition 1.1 (iii) provides pricise upper bound $\exp \{ (\frac{c}{2}+\sqrt{\frac{c^2}{4}-\lambda}) r \}$ of a solution of (4) on $(\mathbb{R}^n, g)$. 
}
\end{rem}
\vspace{2mm}

\noindent{\it Proof of Therem $1.4$} \ 
Proposition 6.1, together with the argument used in [10, Section 3], implies $\sharp \Big( \sigma_{\rm disc} (-\Delta_g)) \cap (0, \frac{c^2}{4}) \Big) = \infty$; for detail, see that section in [10].


\end{document}